\documentclass[10pt,leqno]{siamltex}

\usepackage{amsmath, amssymb, latexsym}
\usepackage{graphicx}
\usepackage{booktabs}
\usepackage{tikz}
\input epsf

\newtheorem{remark}{Remark}[section]

\newcommand{\p}{\partial}
\newcommand{\bd}{\partial}
\newcommand{\Grad}{\nabla}

\renewcommand{\div}{\operatorname{div}}

\newcommand{\bxi}{\boldsymbol{\xi}}

\newcommand{\R}{\mathbb{R}}
\newcommand{\norm}[1]{\|#1\|}
\usepackage{xcolor}

\definecolor{darkblue}{rgb}{0,0,0.6}

\newcommand{\rev}[1]{#1}
\def\bb{{\bf b}}

\def\bg{{\bf g}}

\def\bn{{\bf n}}
\def\bu{{\bf u}}
\def\bv{{\bf v}}

\def\bx{{\bf x}}

\def\bH{{\bf H}}

\def\bV{{\bf V}}

\def\hP{\hat{P}}

\def\hp{\hat{p}}

\def\hw{\hat{w}}
\def\hx{\hat{x}}
\def\hy{\hat{y}}
\def\hphi{\hat{\phi}}

\def\hbv{\hat{\bv}}
\def\hbx{\hat{\bx}}

\def\hQ{\hat{Q}}

\title{Raviart--Thomas Elements with Geometric Correction for Distorted Quadrilateral Meshes}

\author{
So--Hsiang Chou%
\thanks{
Department of Mathematics and Statistics,
Bowling Green State University,
Bowling Green, Ohio 43403-0221
({\tt chou@bgsu.edu}).
}
}

\begin{document}

\maketitle

\begin{center}
\small
Accepted for publication in the
\textit{Journal of Computational and Applied Mathematics}.
\end{center}
\begin{abstract}
Mixed finite element methods based on Raviart--Thomas spaces are
widely used for the numerical approximation of second--order
elliptic problems in flux form. On quadrilateral meshes, however,
the bilinear mapping from the reference element introduces a
spatially varying Jacobian, which may violate the inclusion
property $\mathrm{div}\,V_h \subset W_h$ for the standard
Raviart--Thomas spaces.

In this paper we propose a simple modification of the classical
Raviart--Thomas elements on quadrilateral meshes. The modification
consists of adding geometrically motivated correction terms to the
local basis functions in order to compensate for the geometric
distortion introduced by the bilinear mapping. The resulting spaces retain
the same dimension and degrees of freedom as the classical
Raviart--Thomas elements while restoring the compatibility property.

We present a general framework for constructing such modified
spaces and illustrate the approach by developing modified
versions of the lowest order and next--to--lowest order
Raviart--Thomas elements. Theoretical analysis establishes optimal
approximation properties under the standard shape--regularity
assumption for quadrilateral meshes. Numerical experiments on distorted meshes confirm the predicted convergence rates and show that the modified elements yield consistently improved accuracy over the classical Raviart--Thomas formulation as the geometric distortion increases.
\end{abstract}

\begin{keywords}
mixed finite element methods, Raviart--Thomas elements, H(div) approximation, quadrilateral meshes, distorted meshes, Piola transformation
\end{keywords}

\begin{AMS}
65N30, 65N22, 65F10
\end{AMS}

\pagestyle{myheadings}
\thispagestyle{plain}
\markboth{So--Hsiang Chou}{Raviart--Thomas Elements with Geometric Correction for Distorted Quadrilateral Meshes}

%-------------------- The main part begins here

\section{Introduction}
\rev{
Mixed finite element methods based on the Raviart--Thomas spaces
introduced in the classical work of Raviart and Thomas
\cite{RaviartThomas1977}
are widely used for the numerical approximation of second--order
elliptic problems in flux form.
These methods provide locally conservative approximations and play an
important role in applications such as porous media flow and
subsurface transport.
On triangular meshes the classical Raviart--Thomas elements possess
well--known stability properties and optimal approximation estimates.

On quadrilateral meshes the situation is more subtle.
When the mapping from the reference square to the physical element is
bilinear rather than affine, the Jacobian of the mapping becomes
spatially varying.
As a consequence, the divergence of the mapped Raviart--Thomas space
no longer lies in the associated pressure space.
In particular, the inclusion property
\[
\div V_h \subset W_h
\]
may fail on distorted quadrilateral meshes.
This phenomenon has been widely discussed in the literature and
motivates the development of alternative mixed finite element
constructions for quadrilateral grids.
Classical mixed finite element theory may be found in
\cite{boffi2013mixed},
while the quadrilateral mapping pathology and related approximation
issues were analyzed extensively by Arnold, Boffi, and Falk
\cite{arnold2002approximation,arnold2005quadrilateral}.

The goal of this paper is to develop a simple framework for modifying
classical Raviart--Thomas spaces on quadrilateral meshes so that
optimal approximation properties are preserved under the standard
shape--regularity assumption.
The central idea is to modify the classical Raviart--Thomas basis
functions through carefully designed geometry-dependent correction
terms.
These corrections compensate for the distortion effects introduced by
the bilinear mapping while preserving the dimension and degrees of
freedom of the original space.
}%%% end of rev
Several approaches have been proposed to address these difficulties
for Raviart--Thomas spaces on quadrilateral meshes.
One direction analyzes the behavior of the classical Piola--mapped
spaces on distorted quadrilateral elements, identifying the loss of
compatibility between the velocity and pressure spaces.
Another direction constructs enriched or newly designed
$\bH(\div)$ finite element spaces; notable examples include the
quadrilateral elements of Arnold, Boffi, and Falk
\cite{arnold2002approximation,arnold2005quadrilateral}
and the minimal--dimension constructions of Arbogast and Correa
\cite{arbogast2016two}.
A third direction considers alternative mixed formulations on general
quadrilateral grids, such as the methods of Kwak, Pyo, and Hyon and
the earlier work of Shen
\cite{pyo2011mixed,HyonKwak2010,Shen1994}.

\rev{
Another important direction in finite element design is the use of
enrichment procedures, where additional local basis functions and
additional functionals are introduced in order to improve approximation
properties.
Such approaches have recently been developed for the
Crouzeix--Raviart element and related nonconforming methods; see, for
example,
\cite{Nudo2024JCAM,Nudo2024APNUM,DellAccioGuessabNudo2024}.
In contrast, the present construction does not enlarge the local
finite element space or introduce additional degrees of freedom.
Instead, the classical Raviart--Thomas basis functions are modified
through geometry-dependent correction terms designed to compensate for
the distortion induced by the bilinear mapping on quadrilateral
elements.
Thus the dimension and degrees of freedom of the classical
Raviart--Thomas space are preserved while restoring the compatibility
relation between the discrete velocity and pressure spaces.
}%%%%% end of rev

The remainder of the paper is organized as follows.
Section~2 introduces the geometric framework and reviews properties of
the Piola transformation on quadrilateral elements.
In Section~3 we construct the modified Raviart--Thomas spaces.
Section~4 develops the associated interpolation operators and
establishes their approximation properties.
Stability and error estimates for the mixed finite element method are
proved in Section~5.
Section~6 presents a post-processing procedure based on the proposed
modification.
Numerical experiments on distorted quadrilateral meshes are reported
in Section~7.
Concluding remarks are given in Section~8.
\section{Preliminaries}
In this section we review the geometric mapping for quadrilateral
elements and the classical Raviart--Thomas construction that will
be used in the development of the modified spaces.
Let ${\mathcal Q}_h=\{Q\}$ be a partition of a polygonal domain
$\Omega$ into convex quadrilaterals $Q$ with diameters not greater
than $h$. We take the unit square $\hQ=[0,1]^2$ in the
$\hx\hy$-plane as the reference element and label the four
vertices as $\hbx_i,i=1,2,3,4,$ in a counterclockwise order,
starting at the origin. Let $\hbx = (\hx,\hy)$ and $\bx = (x,y)$.
For a typical quadrilateral $Q\in {\mathcal Q}_h$ with vertices
$\bx_i,i=1,2,3,4$ arranged in a counterclockwise order, there
exists a unique bilinear bijection $F_Q$ from $\hQ$ onto $Q$
defined by
\begin{equation}
\bx = F_Q(\hbx) = \bx_1 + \bx_{21}\hx + \bx_{41}\hy + \bg\hx\hy,
\end{equation}
where $$ \bx_{ij} = \bx_i-\bx_j, \quad \bg = \bx_{12}+\bx_{34}. $$
Thus $ \bx_i = F_Q(\hbx_i), i = 1,2,3,4.$
\begin{figure}[ht]
\centering
\begin{tikzpicture}[scale=2.2, line join=round, line cap=round]

% reference square
\draw[thick] (0,0) rectangle (1,1);
\fill (0,0) circle (0.018);
\fill (1,0) circle (0.018);
\fill (1,1) circle (0.018);
\fill (0,1) circle (0.018);

\node[below left=1pt]  at (0,0) {$\hat{\bx}_1$};
\node[below right=1pt] at (1,0) {$\hat{\bx}_2$};
\node[above right=1pt] at (1,1) {$\hat{\bx}_3$};
\node[above left=1pt]  at (0,1) {$\hat{\bx}_4$};
\node at (0.5,-0.24) {$\widehat Q$};

% mapping arrow
\draw[->,thick] (1.35,0.5) -- (2.10,0.5);
\node at (1.72,0.72) {$F_Q$};

% distorted quadrilateral (more gently tilted)
\coordinate (A) at (2.45,0.02);
\coordinate (B) at (3.65,0.18);
\coordinate (C) at (3.25,1.10);
\coordinate (D) at (2.18,0.86);

\draw[thick] (A)--(B)--(C)--(D)--cycle;

\fill (A) circle (0.018);
\fill (B) circle (0.018);
\fill (C) circle (0.018);
\fill (D) circle (0.018);

\node[below left=1pt]  at (A) {$\bx_1$};
\node[below right=1pt] at (B) {$\bx_2$};
\node[above right=1pt] at (C) {$\bx_3$};
\node[above left=1pt]  at (D) {$\bx_4$};
\node at (2.95,-0.24) {$Q$};

\end{tikzpicture}
\caption{The bilinear map $F_Q:\widehat Q\to Q$ from the reference square to a quadrilateral element.}
\label{fig:bilinear-map}
\end{figure}

Figure~\ref{fig:bilinear-map} illustrates the bilinear mapping from the
reference square to a general convex quadrilateral, which is the source
of the geometric distortion discussed below.
Because the Jacobian of the bilinear mapping is not constant,
the divergence of the Piola--transformed Raviart--Thomas space
does not necessarily lie in the discrete pressure space on
distorted quadrilateral meshes.

The Jacobian matrix
$DF_Q$ of $F_Q$ is given by
\begin{equation}\label{jacobian}
{DF}_Q = \begin{pmatrix}
             \frac{\p x}{\p\hx} & \frac{\p x}{\p\hy} \\
             \frac{\p y}{\p\hx} & \frac{\p y}{\p\hy}
             \end{pmatrix}
           = (\bx_{21}+\bg\hy, \bx_{41}+\bg\hx).
\end{equation}
In addition, the determinant $J_Q = \det{DF}_Q$ is a linear
function of $\hx$ and $\hy$:
\begin{equation}\label{convex}
J_Q(\hx,\hy) = \alpha+\beta\hx+\gamma\hy,
\end{equation}
where
\begin{equation*}
\alpha = \det(\bx_{21},\bx_{41}), \quad \beta =
\det(\bx_{21},\bg),
 \quad \gamma = \det(\bg,\bx_{41}).
\end{equation*}
The area of the quadrilateral $Q$ is equal to $J_Q(1/2,1/2)$,
since we have by the midpoint rule that
\begin{equation}\label{area}
 |Q| = \int_Q dxdy =
\int_0^1\int_0^1 J_Q(\hx,\hy)\,d\hx d\hy = J_Q(1/2,1/2).
\end{equation}
 Furthermore, $J>0$ by (\ref{convex}), since we have a convex quadrilateral.

Denote by $S_i$ the subtriangle of $Q$ with vertices
$\bx_{i-1},\bx_i$ and $\bx_{i+1}$ ( $\bx_0=\bx_4$). Let $h_Q$ be
the diameter of $Q$ and $\rho_Q= 2 \min_{1\le i\le 4}\{\mbox{
diameter of circle inscribed in } S_i\}$. Throughout the paper we
assume a {\it regular} family of partitions
 ${\mathcal Q}=\{ {\mathcal Q}_h \},$
i.e., there exists a positive constant $\sigma$, independent of
$h$, such that
\begin{equation}\label{regular}
 \frac{h_Q}{\rho_Q}\le \sigma, \,\quad \forall Q\in
{\mathcal Q}_h, \forall {\mathcal Q}_h\in {\mathcal Q}.
 \end{equation}

The relationships between different notions of shape regularity for
quadrilateral meshes were
clarified in \cite{ChouHe2002}.

The Piola transformation ${\mathcal P}_Q$ transforms a
vector-valued function on $\hQ$ to one on $Q$ by
\begin{equation}\label{Piola}
\bv = {\mathcal P}_Q\hbv = \frac1J{DF}\hbv\circ F^{-1},
\end{equation}
where we drop the subscript $Q$ for brevity. This transformation
preserves the $\bH(\div)$ space on the reference element. The
divergences are related by (cf. \cite{boffi2013mixed})
\begin{equation}\label{crux}
 \div\bv = \frac1J\div\hbv.
\end{equation}

%-------------------- 3rd Section begins here --------------------
 Let $\bV=\bH(\div;\Omega)$ and $W=L^2(\Omega)$. To facilitate the
 comparison between our approach and the standard one for constructing
 their approximation spaces, let us review the standard approach here.
  First one
defines reference spaces $\hat\bV=\bV(\hQ)$ and $\hat W=W(\hQ)$ on
$\hQ$. Typically one defines
 \[ \hat W:=\div \hat \bV.\]
 Then using the bilinear
transformation $F_Q$ to relate the scalar fields on $Q$ and $\hQ$,
we construct the local pressure space by $W(Q)=\{w:Q\to R:\quad
w\circ F_Q\in W(\hQ)\}$. In addition, using the Piola
transformation to relate vector fields on $Q$ and $\hQ$, we
construct the local velocity space by $\bV(Q)=\{\bv:Q\to R^2:\quad
\bv={\mathcal P}_Q\hat\bv,\quad\hat\bv\in \bV(\hQ)\}.$ Finally,
the global pressure space $W_h$ is defined as
\begin{equation}
W_h := \{w\in W :w|_Q\in W(Q),\quad\forall Q\in{\mathcal Q}_h \}
\end{equation}
and the global velocity space $\bV_h$ as
\begin{equation}
\bV_h=\{\bv\in \bV: \bv|_Q \in \bV(Q),\,\, \forall Q\in{\mathcal
Q}_h\}.
\end{equation}

For example, the Raviart--Thomas space $\bV_h\times W_h$ of order
$k, k\ge 0,$ for approximating functions in $\bV \times W$ is
constructed as follows. First one defines the velocity space $
\bV_k(\hQ)$ and pressure space $W_k(\hat Q)$ on $\hQ$ by
\begin{align}\label{def1} \bV_k(\hQ)&=RT_k(\hat Q)
:=Q_{k+1,k}\times Q_{k,k+1},\\ W_k(\hQ)&=\div \bV(\hat
Q)=Q_{k,k},
\end{align}
where $Q_{l,m}$ is the space of polynomials of degree at most $l$
and $m$ in $\hat x$ and $\hat y$, respectively. On a general $Q$,
the local Raviart--Thomas space is defined by
\begin{align}\label{def2}
\bV_k(Q)&=RT_k(Q):=\{\bv={\mathcal P}_Q\hat \bv:\qquad \hat\bv
\in\bV_k(\hQ)\},\\ W_k(Q)&:=\{w=\hw\circ F_Q^{-1}:\hw\in
W_k(\hQ)\}.
\end{align}
Finally the global Raviart--Thomas space is defined: for the
pressure space
\begin{equation}
W_h := \{w\in W :w|_Q\in W_k(Q),\quad\forall Q\in{\mathcal Q}_h
\},
\end{equation}
and for the velocity space $\bV_h$
\begin{equation}
\bV_h=RT_k:=\{\bv\in \bV: \bv|_Q = {\mathcal P}_Q\hbv,\,\hbv\in
\bV_k(\hQ),\,\, \forall Q\in{\mathcal Q}_h\},
\end{equation}
\begin{remark}
Due to (\ref{crux}), $\div \bV_h$ is not contained in $W_h$. This
is the property we aim to restore when modifying $\bV_h$, a
central theme of the paper.
\end{remark}
For further properties of these spaces, see \cite{boffi2013mixed,Shen1994}

The Raviart--Thomas projection ${\mathcal
RT}^{(k)}_h:\bH^1(\Omega)\to\bV_h,\, k\ge 0$ is defined as
follows. (We drop the order index if there is no confusion.) Given
a function $\hat\bu$ on $\hQ$ a unique interpolant $\hat {\mathcal
RT}\hat \bu$ is defined by the following degrees of freedom:
\begin{align}\label{interp}
\int_{\hat l_i} (\hat\bu-\hat {\mathcal RT}\hat \bu)\cdot
\hat\bn\phi ds=0,\qquad \forall \phi\in P_k(\hat
l_i),\,i=1,2,3,4,\\ \label{interp1} \int_{\hQ}(\hat\bu-\hat
{\mathcal RT}\hat\bu)\cdot\hat \bv\,d\hat x\hat y=0\qquad
\forall \hat\bv\in Q_{k-1,k}\times Q_{k,k-1}\quad (k\ge 1),
\end{align}
where $\hat l_i$ are the four edges of $\hQ$ and $P_k(\hat l_i)$
is the space of polynomials in $s$ of degree at most $k$. On $Q$,
${\mathcal RT}_Q \bu={\mathcal P}_Q\hat {\mathcal RT}\hat
\bu.$ A projection operator $P_h:W\to W_h$ can be defined
\cite{boffi2013mixed} in the same manner. First define the local $L^2$
projection $\hP$ on $\hQ$ by
\begin{align}
\int_{\hQ} (\hP\hp-\hp)\hat w\,d\hx d\hy=0 \qquad \forall \hat
w\in W(\hQ)
\end{align}
and then set
\begin{gather}
P_Q\phi = (\hP\hphi)\circ F_Q^{-1}, \qquad \forall \phi\in L^2(Q),
\end{gather}
where $\hphi = \phi\circ F_Q$. Finally, we define
\begin{equation}
{\mathcal RT}_h\bv|_Q = {\mathcal RT}_Q\bv, \qquad P_h\phi|_Q
= P_Q\phi.
\end{equation}

 \subsection{Essential Properties Leading to
Optimal Order Approximations} Let us collect some essential
properties that would lead to optimal order approximation in the
$L_2$ norm  when using the $RT_k$ space as an approximation
space. This is a necessary step to motivate and clarify the later
construction of the modified $RT_k$ space.

\begin{lemma}\label{e1}
The following orthogonality relations hold
\begin{gather}
(\div(\bu-{\mathcal RT}_h^{(k)}\bu),w) = 0, \qquad
\forall\bu\in\bV,
\quad \forall w\in W_h, \label{orthogonality1} \\
(\div\bv,\phi-P_h\phi) = 0, \qquad \forall\bv\in\bV_h, \quad
\forall\phi\in W. \label{orthogonality2}
\end{gather}
\end{lemma}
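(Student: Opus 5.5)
Both relations reduce, one element $Q$ at a time, to statements on $\hQ$ via the Piola identity (\ref{crux}) and the change of variables $dx\,dy = J\,d\hx\,d\hy$. The Jacobian cancels exactly because of these two facts. For $\bv={\mathcal P}_Q\hbv$ and any scalar $\phi$ on $Q$ with $\hphi=\phi\circ F_Q$, we have
\[
\int_Q \div\bv\,\phi\,dx\,dy=\int_{\hQ}\frac1J\,\widehat{\div}\hbv\,\hphi\,J\,d\hx\,d\hy=\int_{\hQ}\widehat{\div}\hbv\,\hphi\,d\hx\,d\hy .
\]
So the global inner products split as sums over $Q\in{\mathcal Q}_h$ of reference-element integrals without any weight.

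For (\ref{orthogonality1}), I fix $Q$, write $\bu|_Q={\mathcal P}_Q\hat\bu$, and take $w|_Q=\hw\circ F_Q^{-1}$ with $\hw\in Q_{k,k}$. By the identity above it suffices to show
\[
\int_{\hQ}\widehat{\div}(\hat\bu-\hat{\mathcal RT}\hat\bu)\,\hw\,d\hx\,d\hy=0\quad\text{for all }\hw\in Q_{k,k}.
\]
I integrate by parts on $\hQ$, which gives a boundary term $\sum_i\int_{\hat l_i}(\hat\bu-\hat{\mathcal RT}\hat\bu)\cdot\hat\bn\,\hw\,ds$ minus $\int_{\hQ}(\hat\bu-\hat{\mathcal RT}\hat\bu)\cdot\widehat\Grad\hw$. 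On each edge, $\hw|_{\hat l_i}\in P_k(\hat l_i)$, so the boundary terms vanish by (\ref{interp}). For the interior term, $\widehat\Grad\hw=(\p_{\hx}\hw,\p_{\hy}\hw)\in Q_{k-1,k}\times Q_{k,k-1}$, so it vanishes by (\ref{interp1}) when $k\ge1$; when $k=0$ it vanishes trivially. Summing over $Q$ gives the claim.

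The step needing the most care is the domain of definition. The statement is for $\bu\in\bV$, but ${\mathcal RT}_h$ was defined on $\bH^1(\Omega)$. I will take $\bu$ in the domain where the edge moments make sense, e.g.\ $\bV\cap\bH^1$, or more generally $\bu$ with enough regularity that the normal traces are integrable. I also need that the Piola map sends edge normal moments to reference ones, so that $\hat\bu$ is well defined. This follows since $\bu\cdot\bn\,ds=\hat\bu\cdot\hat\bn\,d\hat s$ under ${\mathcal P}_Q$; it is not actually needed beyond the definition ${\mathcal RT}_Q\bu={\mathcal P}_Q\hat{\mathcal RT}\hat\bu$.

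For (\ref{orthogonality2}), I take $\bv\in\bV_h$, so $\bv|_Q={\mathcal P}_Q\hbv$ with $\hbv\in\bV_k(\hQ)$. The same identity reduces the claim to
\[
\int_{\hQ}\widehat{\div}\hbv\,(\hphi-\hP\hphi)\,d\hx\,d\hy=0 .
\]
This holds because $\widehat{\div}\hbv\in Q_{k,k}=W_k(\hQ)$, and $\hP$ is the unweighted $L^2(\hQ)$ projection onto that space. Summing over elements finishes the proof.
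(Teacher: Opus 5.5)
Your argument is correct. The paper states this lemma without proof, but your treatment of \eqref{orthogonality1} is exactly the argument the paper later gives for \eqref{div11} in Lemma~\ref{Lemma3.50}: pull back to $\hQ$ so that $J$ cancels via \eqref{crux} and $dx\,dy=J\,d\hx\,d\hy$, integrate by parts, and annihilate the edge and interior terms with \eqref{interp}--\eqref{interp1}. Your proof of \eqref{orthogonality2} is likewise the natural one: $J$ again cancels, $(P_h\phi)\circ F_Q=\hP\hphi$, and $\hP$ is the unweighted $L^2(\hQ)$ projection onto $Q_{k,k}$, a space that contains $\div\hbv$.
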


From now on, the letter $C$ will denote a generic positive
constant which is independent of $h$. It may have different values
in different places, especially when used in proof.
\begin{lemma}\label{e2}
The following estimates are valid for regular partitions:
\begin{align} \norm{P_h\phi}_0 &\le
C\norm{\phi}_0, \qquad \forall\phi\in W, \label{bound_P}\\
\norm{\phi-P_h\phi}_0 &\le Ch^{k+1}|\phi|_{k+1},\quad  k\ge 0,
\qquad \forall \phi\in H^{k+1}(\Omega). \label{approx_P1}
\end{align}
\end{lemma}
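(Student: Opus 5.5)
The plan is to prove both estimates elementwise, pulling everything back to $\hQ$ through $F_Q$ and using the regularity assumption (\ref{regular}) to control the Jacobian. The basic geometric facts I will use, all standard for regular quadrilateral partitions (cf. \cite{ChouHe2002}), are
\[
C^{-1}h_Q^2\le J_Q\le Ch_Q^2 \quad\text{on }\hQ,\qquad |DF_Q|_{0,\infty,\hQ}\le Ch_Q,\qquad |DF_Q^{-1}|_{0,\infty,Q}\le Ch_Q^{-1}.
\]
The lower bound on $J_Q$ follows from (\ref{convex}): $J_Q$ is linear, so its minimum over $\hQ$ is attained at a vertex, and its value at vertex $\hbx_i$ is twice the area of $S_i$, which is $\ge C\rho_Q^2\ge Ch_Q^2$. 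Since the $W$-norm of $P_h\phi$ and of $\phi-P_h\phi$ is a sum of local contributions, it suffices to work on a single $Q$.

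For (\ref{bound_P}): by the change of variables,
\[
\norm{P_Q\phi}_{0,Q}^2=\int_{\hQ}|\hP\hphi|^2J_Q\,d\hx d\hy\le Ch_Q^2\norm{\hP\hphi}_{0,\hQ}^2\le Ch_Q^2\norm{\hphi}_{0,\hQ}^2 .
\]
The last step uses that $\hP$ is the unweighted $L^2(\hQ)$ projection and hence has norm one. Transforming back with $J_Q\ge Ch_Q^2$ gives $\norm{\hphi}_{0,\hQ}^2\le Ch_Q^{-2}\norm{\phi}_{0,Q}^2$, so $\norm{P_Q\phi}_{0,Q}\le C\norm{\phi}_{0,Q}$. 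Summing over $Q$ gives (\ref{bound_P}).

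For (\ref{approx_P1}): $\hP$ reproduces $Q_{k,k}\supset P_k$, so the Bramble--Hilbert lemma on $\hQ$ gives
\[
\norm{\hphi-\hP\hphi}_{0,\hQ}\le C|\hphi|_{k+1,\hQ}.
\]
Scaling back as above yields $\norm{\phi-P_Q\phi}_{0,Q}\le Ch_Q\,|\hphi|_{k+1,\hQ}$. The remaining, and main, step is to bound $|\hphi|_{k+1,\hQ}$ by $Ch_Q^{k}\norm{\phi}_{k+1,Q}$. Here the map is not affine, so I apply the chain rule (Faà di Bruno) to $\hphi=\phi\circ F_Q$. A derivative of order $k+1$ of $\hphi$ is a sum of terms $D^j\phi\circ F_Q$, $1\le j\le k+1$, multiplied by products of derivatives of $F_Q$. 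Because $F_Q$ is bilinear, only $DF_Q$ and the mixed second derivative $\bg$ are nonzero, and $|\bg|\le Ch_Q$, so each such term is $O(h_Q^{k+1})$ times $|D^j\phi|$. This gives
\[
|\hphi|_{k+1,\hQ}\le Ch_Q^{k+1}h_Q^{-1}\norm{\phi}_{k+1,Q}=Ch_Q^{k}\norm{\phi}_{k+1,Q},
\]
where the factor $h_Q^{-1}$ comes from $J_Q^{-1/2}$ when changing variables. Hence $\norm{\phi-P_Q\phi}_{0,Q}\le Ch_Q^{k+1}\norm{\phi}_{k+1,Q}$.

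This is the obstacle: the estimate carries the full $H^{k+1}$ norm, including lower-order seminorms, rather than $|\phi|_{k+1}$ alone. To remove them I would apply the argument to $\phi-p$ with $p\in P_k(Q)$ chosen by the Bramble--Hilbert lemma on $Q$; for a regular $Q$ the constant is uniform, and $|\phi-p|_{j,Q}\le Ch_Q^{k+1-j}|\phi|_{k+1,Q}$. This step also needs $p$ to be invariant under $P_Q$. I will use that $P_1\circ F_Q\subset Q_{1,1}$, so linears are reproduced, which suffices for $k=0,1$. For general $k$ this is where care is needed: $p\circ F_Q$ lies in $Q_{k,k}$ only when $k$ is at least the total degree, so it may fall outside $W_k(\hQ)$. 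If that fails, I would settle for the bound with $\norm{\phi}_{k+1}$, or restrict to the cases $k=0,1$ developed in the paper, where the reproduction argument does go through.
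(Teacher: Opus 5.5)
Your argument for (\ref{bound_P}) is correct. For (\ref{approx_P1}), however, the chain-rule step is false as soon as $k\ge1$. In the Fa\`a di Bruno expansion of a derivative of order $k+1$ of $\hphi=\phi\circ F_Q$, the coefficient of $D^j\phi\circ F_Q$ is a product of exactly $j$ derivatives of $F_Q$. Each factor is $O(h_Q)$, so the coefficient is $O(h_Q^j)$, not $O(h_Q^{k+1})$. For instance
\[
\p_{\hx}\p_{\hy}\hphi=D^2\phi\circ F_Q\,[\p_{\hx}F_Q,\p_{\hy}F_Q]+(\Grad\phi\circ F_Q)\cdot\bg ,
\]
and for a fixed non-parallelogram shape scaled to size $h_Q$ one has $|\bg|=c\,h_Q$ with $c>0$. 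Take a linear $\phi$ vanishing at a point of $Q$: the left side of your bound $|\hphi|_{2,\hQ}\le Ch_Q\norm{\phi}_{2,Q}$ is of order $h_Q|\Grad\phi|$, while the right side is of order $h_Q^2|\Grad\phi|$. The correct estimate is
\[
|\hphi|_{k+1,\hQ}\le Ch_Q^{-1}\sum_{j=1}^{k+1}h_Q^{j}|\phi|_{j,Q}.
\]
So without subtracting a polynomial you only get $\norm{\phi-P_Q\phi}_{0,Q}\le C\sum_{j=1}^{k+1}h_Q^{j}|\phi|_{j,Q}$, which is first order. The step with $p$ is therefore essential, not a cosmetic removal of lower-order seminorms. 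It does close the proof: with $\psi=\phi-p$ and $|\psi|_{j,Q}\le Ch_Q^{k+1-j}|\phi|_{k+1,Q}$, every term of the sum becomes $Ch_Q^{k+1}|\phi|_{k+1,Q}$.

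Your concern about reproduction for general $k$ is also unfounded. Each component of $F_Q$ lies in $Q_{1,1}$, so a monomial of total degree $m\le k$ composed with $F_Q$ is a product of $m$ bilinear factors and lies in $Q_{m,m}\subset Q_{k,k}$. Hence $P_k\circ F_Q\subset W_k(\hQ)$ and $P_Qp=p$, so the argument covers all $k\ge0$, as the lemma asserts. Conversely, the fallback you propose with $\norm{\phi}_{k+1}$ would not have been justified without this reproduction.

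For comparison, the paper gives no argument of its own and cites Girault--Raviart, pp.~106--108. That route avoids both the Fa\`a di Bruno bookkeeping and the Bramble--Hilbert step on the physical element. Since $\hP$ reproduces $Q_{k,k}$, which is precisely the kernel of the anisotropic seminorm $[\hphi]_{k+1}=\norm{\p_{\hx}^{k+1}\hphi}_{0,\hQ}+\norm{\p_{\hy}^{k+1}\hphi}_{0,\hQ}$, a Bramble--Hilbert lemma in this seminorm gives $\norm{\hphi-\hP\hphi}_{0,\hQ}\le C[\hphi]_{k+1}$. Because $F_Q$ is affine in each variable separately ($\p_{\hx}^2F_Q=\p_{\hy}^2F_Q=0$), the pure derivative $\p_{\hx}^{k+1}\hphi=D^{k+1}\phi\circ F_Q\,[\p_{\hx}F_Q,\dots,\p_{\hx}F_Q]$ contains no lower-order terms. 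This gives $[\hphi]_{k+1}\le Ch_Q^{k}|\phi|_{k+1,Q}$ directly; it is the same anisotropic seminorm the paper uses in the proof of Lemma~\ref{Lemma3.4}. Your corrected route needs only the standard isotropic Bramble--Hilbert lemma. The price is the inclusion $P_k\circ F_Q\subset Q_{k,k}$ and a Bramble--Hilbert constant on $Q$ that is uniform under (\ref{regular}).
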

\begin{proof} Inequality
(\ref{approx_P1}) can be found in \cite[pp. 106--108]{girault1986finite}.
\end{proof}

 Lemmas \ref{e1}--\ref{e2} are sufficient to show the following optimal
 order approximation in the $L^2$ norm.
 \begin{theorem}\label{e3}
If the partitions are regular then the optimal--order approximation
holds for the $RT_k$ interpolation operator:
\begin{gather}\label{Falk}
||\bu-{\mathcal RT}^{(k)}_h\bu||_0 \le Ch^{k+1}\,|\bu|_{k+1}.
\end{gather}
\end{theorem}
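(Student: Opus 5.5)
The plan is a local scaling argument on each element $Q$, followed by summation over $\mathcal Q_h$. Fix $Q\in\mathcal Q_h$ and write $\bu$ on $Q$ in reference form through the inverse Piola map, $\hat\bu = J\,DF^{-1}(\bu\circ F) = \operatorname{adj}(DF)\,(\bu\circ F)$. The structural fact I will use is that $\operatorname{adj}(DF)$ has entries that are \emph{linear} polynomials in $\hx,\hy$, by (\ref{jacobian}). By definition $\mathcal RT_Q\bu=\mathcal P_Q\hat{\mathcal RT}\hat\bu$, so
\[
\bu-\mathcal RT_Q\bu=\mathcal P_Q(\hat\bu-\hat{\mathcal RT}\hat\bu).
\]
Under (\ref{regular}) we have $|DF|\le Ch$ and $C^{-1}h^2\le J\le Ch^2$ (see \cite{ChouHe2002}). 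Hence $\|\mathcal P_Q\hat\bv\|_{0,Q}\le C\|\hat\bv\|_{0,\hQ}$: the factor $J^{-2}|DF|^2$ is $O(h^{-2})$ and the area element contributes $J=O(h^2)$.

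\textbf{Step 1 (reference estimate).} The degrees of freedom (\ref{interp})--(\ref{interp1}) define a bounded operator $\hat{\mathcal RT}:\bH^1(\hQ)\to RT_k(\hQ)$. It reproduces $RT_k(\hQ)\supset (P_k)^2$. The Bramble--Hilbert lemma therefore gives
\[
\|\hat\bu-\hat{\mathcal RT}\hat\bu\|_{0,\hQ}\le C|\hat\bu|_{k+1,\hQ}.
\]

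\textbf{Step 2 (pulling the seminorm back to $Q$).} Expand $\partial^\alpha\hat\bu$ for $|\alpha|=k+1$ by the Leibniz rule applied to $\operatorname{adj}(DF)\,(\bu\circ F)$. Only terms with at most one derivative on $\operatorname{adj}(DF)$ survive, and these are of size $O(h)$. Derivatives of $\bu\circ F$ are handled by the chain rule (Fa\`a di Bruno). Each first derivative of $F$ contributes $O(h)$. The only nonzero second derivative of $F$ is $\partial_{\hx}\partial_{\hy}F=\bg$, and $|\bg|\le Ch$ under shape regularity. Collecting powers of $h$ and changing variables back to $Q$, where the area element contributes $O(h^{-1})$ in the $L^2$ norm, I expect
\[
\|\bu-\mathcal RT_Q\bu\|_{0,Q}\le C\sum_{j=1}^{k+1}h^{j}|\bu|_{j,Q}.
\]
The top term $h^{k+1}|\bu|_{k+1,Q}$ is the desired one.

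\textbf{Main obstacle.} The difficulty is the lower-order terms $h^j|\bu|_{j,Q}$ with $j\le k$, which arise from mixed derivatives involving $\bg$. For a genuinely non-parallelogram element only $|\bg|\le Ch$ is available, not $Ch^2$, so these terms do not carry the full power $h^{k+1}$. For $k=0$ the issue is absent and the argument is complete.

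For $k\ge1$ I would first try to remove these terms with a polynomial-subtraction trick. Choose $\mathbf p\in (P_k(Q))^2$ that approximates $\bu$ to order $k+1$ on $Q$. Then split
\[
\bu-\mathcal RT_Q\bu=(\bu-\mathbf p)-\mathcal RT_Q(\bu-\mathbf p)+(\mathbf p-\mathcal RT_Q\mathbf p).
\]
The first two pieces are controlled by the $H^1$-stability of $\mathcal RT_Q$ obtained from Step 1, which gives $O(h^{k+1})|\bu|_{k+1,Q}$. The remaining piece requires checking whether $\mathcal RT_Q$ reproduces the physical polynomials $(P_k(Q))^2$, and this is where the geometry enters.

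If that reproduction fails, I would instead use the orthogonality relations of Lemma~\ref{e1} together with the $P_h$ bounds of Lemma~\ref{e2}. The idea is to show that the $\bg$-dependent contributions are divergence-type terms whose projections onto $W_h$ vanish, so that only the $h^{k+1}$ term survives.

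I regard this step as the real content of the theorem. It may in fact require the restriction that the mesh be asymptotically parallelogram, which is consistent with the Arnold--Boffi--Falk analysis \cite{arnold2005quadrilateral}, if the lower-order terms cannot be eliminated.
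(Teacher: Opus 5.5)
\textbf{The gap.} Your proposal stops exactly where the proof is decided. You never check whether $\mathcal{RT}_Q$ reproduces $(P_k(Q))^2$, and you leave open that asymptotically parallelogram meshes might be needed. That hedge is unfounded: the parallelogram-type restriction in \cite{arnold2005quadrilateral} concerns only the divergence, not the $L^2$ error of the field.

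\textbf{The missing fact.} It is a two-line computation. By \eqref{jacobian}, $\partial F/\partial\hx=\bx_{21}+\bg\hy$ depends only on $\hy$, and $\partial F/\partial\hy=\bx_{41}+\bg\hx$ depends only on $\hx$. So the first row $(y_{\hy},-x_{\hy})$ of $\operatorname{adj}(DF)$ is linear in $\hx$ alone, and the second row $(-y_{\hx},x_{\hx})$ is linear in $\hy$ alone. Since $x,y\in Q_{1,1}$ as functions of $(\hx,\hy)$, every $\mathbf p\in(P_k(Q))^2$ satisfies $\mathbf p\circ F\in(Q_{k,k})^2$. Hence $\operatorname{adj}(DF)(\mathbf p\circ F)\in Q_{k+1,k}\times Q_{k,k+1}=RT_k(\hQ)$. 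Thus $(P_k(Q))^2\subset RT_k(Q)$ for every bilinear $F$, and unisolvence gives $\mathcal{RT}_Q\mathbf p=\mathbf p$.

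\textbf{How the argument closes.} With this, your splitting finishes the proof, and the high-order chain-rule bookkeeping of Step~2 is unnecessary. Write $\hat{\mathbf w}=\operatorname{adj}(DF)(\mathbf w\circ F)$. The $\bH^1(\hQ)$-boundedness of $\hat{\mathcal{RT}}$, the Piola scaling, $J\ge Ch^2$ and $|\bg|\le Ch$ give
\[
\|\mathbf w-\mathcal{RT}_Q\mathbf w\|_{0,Q}\le C\|\hat{\mathbf w}\|_{1,\hQ}\le C\bigl(\|\mathbf w\|_{0,Q}+h|\mathbf w|_{1,Q}\bigr).
\]
Take $\mathbf w=\bu-\mathbf p$, with $\mathbf p$ the averaged Taylor polynomial of degree $k$ on $Q$. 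Bramble--Hilbert applies on $Q$ itself, because a convex shape-regular $Q$ is star-shaped with respect to a ball of radius comparable to $h_Q$. This yields $Ch^{k+1}|\bu|_{k+1,Q}$, and summing over $Q$ gives (\ref{Falk}).

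\textbf{The case $k=0$.} Your claim that $k=0$ is already finished after Step~2 is not right. For $k=0$, the Leibniz term carrying the derivative on $\operatorname{adj}(DF)$ multiplies $\bu\circ F$ itself. Since $\partial\operatorname{adj}(DF)=O(|\bg|)$, and $|\bg|$ is genuinely of order $h$ on non-parallelograms, this term contributes $C\|\bu\|_{0,Q}$ with no power of $h$. So you must subtract a constant vector there too; constants are reproduced by the computation above.

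\textbf{The fallback and the paper.} The fallback through Lemmas~\ref{e1}--\ref{e2} cannot work. Those relations only involve $\div(\bu-\mathcal{RT}_h\bu)$ tested against $W_h$, and the pressure projection. They say nothing about the divergence-free part of the interpolation error, so they cannot produce an $L^2$ bound for the field. The paper itself supplies no argument beyond asserting that Lemmas~\ref{e1}--\ref{e2} suffice, with citations. It also places this next to a remark that (\ref{Falk}) fails on quadrilaterals, which contradicts the theorem as stated. The polynomial-reproduction argument above is essentially how the optimal $L^2$ estimate is obtained in \cite{arnold2005quadrilateral}, and it proves the statement on all shape-regular convex quadrilateral meshes.
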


Inequality (\ref{Falk}) is valid \cite{boffi2013mixed} for
regular triangular and parallelogram partitions, but invalid for quadrilateral partitions \cite{arnold2002approximation, arnold2005quadrilateral}. So we now know the $L^2$ interpolation
error is optimal for the Raviart--Thomas operators, but what is
surprising is that they also showed that unless the partitions are
almost parallelogram, error in the divergence has only suboptimal
order of convergence
\begin{equation*}
 \norm{\div(\bu-{\mathcal RT}_h^{(k)} \bu)}_0 \le
Ch^k\,|\div\bu|_{k+1}.
\end{equation*} In particular, there is no convergence
for $k=0$. They then proposed a family of new elements $ABF_k$
(Arnold--Boffi--Falk space of order $k$), which has optimal order
of convergence. For $k=0$, the new space has two more degrees of
freedom than  $RT_k$ does, and for $k=1$ four more degrees of
freedom have to be added to the set (\ref{interp}). We show in
this paper that alternative spaces can be constructed (alternative in the sense that they preserve the same dimensions
as $RT_k$ while maintaining optimal-order convergence). {\it The crux of the
matter is to have a new look at the reference element and at the
relation $\div\bv=\frac{1}{J}\div\hat \bv$.} As a consequence,
their framework does not cover our elements and their necessary
and sufficient condition for a given space to have optimal order
of convergence does not apply to our case. Under a fixed reference
element, this necessary and sufficient condition states that
$RT_k$ space must be enlarged so that they contain the so--called $R_{k+1}$ space (see \cite{arnold2005quadrilateral}) for its definition).

Lemmas \ref{e1}--\ref{e2} are not sufficient to show optimal order
approximation of divergence. Our main objective is to construct a
modified space $\bV_h=MRT_k$ for which the discrete divergence maps naturally
into the pressure space $W_h$, while at the same time not violating
the validity of Lemmas \ref{e1}--\ref{e2}. In other words, Lemmas
\ref{e1}--\ref{e2} together with this compatibility relation imply the optimal order approximation of
the divergence term. The two approaches, ours and that of \cite{arnold2005quadrilateral}, actually complement each
other and make the optimal order theory complete under the
shape--regularity condition. Our approach is more natural from a
finite volume application viewpoint \cite{ChouTang2000} since we insist on
$\div \bV_h\subset W_h$ (cf. the last paragraph in Section
\ref{sub}), while the traditional finite element approach
maintains $\div \hat\bV=\hat W$ ($\div\bV_h\subset W_h$ is
violated instead).

Let us now summarize the main theme of this paper: Given a
$RT_k$ velocity space and its accompanying pressure space $W_k$,
we construct a modified space $MRT_k$ with the same
pressure space $W_k$ such that $\mbox{dim}\, RT_k=\mbox{dim}\,
MRT_k$ and $\div MRT_k\subset W_k$.
 \subsection{Construction of the Modified $RT_0$ Space}\label{sub}

Modified Raviart–-Thomas space $MRT_0$ has appeared in the literature, beginning with the technical report of Shen \cite{Shen1994}, and was later developed and analyzed in detail by Kwak and collaborators \cite{pyo2011mixed,HyonKwak2010}. In this work, $MRT_0$ is used as a building block, and we briefly recall its construction for completeness. Given a velocity field $\hat\bu=(a+b\hx,c+d\hy)$
in $RT_0(\hQ)$, we create a new $\hat\bu_M$ in the following
form:
\begin{eqnarray*}
a+b\hx+H_1\hx(\hx-1),\\ c+d\hy+H_2\hy(\hy-1),
\end{eqnarray*}
where $H_1$ and $H_2$ are determined by insisting on $Q$ that $
\div\bu_M$ is in $W_k$ ($k=0$). Recalling that $\bu_M ={\mathcal
P}_Q\hat \bu_M$, we thus require
\[ \div \bu_M=\frac 1 J\div\hat\bu_M \in  W_k\]
or equivalently
\[ (\alpha+\beta\hx+\gamma \hy)t_0=(b+d)+\left((H_1(2\hx-1)+H_2(2\hy-1)\right)\]
for some constant $t_0$. Comparing the corresponding coefficients,
we can determine $t_0,H_1$ and $H_2$ in terms of $a,b,c,d$ and
$\alpha,\beta,\gamma$. It is easily verified upon using
(\ref{area}) that
\[t_0=\frac{b+d}{|Q|},\qquad  H_1=\frac{\beta}{2}t_0,
\qquad H_2=\frac{\gamma}2t_0.\] Hence $MRT_0(\hQ)$ is
easily determined as the four dimensional vector space
\begin{equation}\label{def}
 MRT_0(\hQ):=\{\hat\bu:\hat
u_1=(a+bx+\frac{(b+d)\beta}{2|Q|}\hx(\hx-1),
c+dy+\frac{(b+d)\gamma}{2|Q|}\hy(\hy-1)\}.
\end{equation}
 Note
that strictly speaking, $MRT_0(\hQ)$ should be denoted as
$MRT_0(\hQ,Q)$. This is slightly different from the
traditional reference element approach. Nevertheless we can still
create global space $\bV_h=MRT_0$ from the local ones in
the usual way. Once we have set up the fact that $\div
\bV_h\subset W_h$ or $\div \bV_h=W_h$ in this case, the optimal
estimate comes naturally. For instance, if we approximate $\div
\bu=f$ on $Q$ by $\div\bu_h=\bar f$, where $\bar
f=\frac{1}{|Q|}\int_Q fdxdy$ and $\bu_h\in MRT_0$. Then it
is immediate that $||\div \bu-\div\bu_h||_0\le Ch||\div \bu||_1$.
This already hints at why the above principle can lead to optimal
$\div$ approximation. {\it That is, the property $\div
\bV_h\subset W_h$ plus an approximation property of a local $L^2$
projection should be sufficient for optimal order approximation of
the divergence.} One may wonder why the bubble functions were
added. The reason is that we want to maintain the property
(\ref{orthogonality1}) by not having any new effect through
(\ref{interp}). (See Lemma \ref{Lemma3.50} below for the general
case.) In the remaining sections we will develop these ideas
further for $RT_1$, which is much more involved. The $RT_0$ case follows by a simpler argument.

The approach taken in the present paper is somewhat different.
Instead of enlarging the polynomial space or introducing new
degrees of freedom, we modify the local Raviart--Thomas basis
functions by adding correction terms that depend on the
geometric distortion of the quadrilateral element.
These corrections restore optimal $L^2$ approximation properties
while preserving the dimension of the classical Raviart--Thomas
space.

The resulting modified space satisfies the discrete compatibility
relation
\[
\div V_h \subset W_h,
\]
which is central to the stability of the mixed formulation.
From this perspective the present construction may be viewed as
a geometric correction of the standard Raviart--Thomas space
on distorted quadrilateral meshes.

%-------------------- 5th Section begins here --------------------
\section{Construction of the Modified $RT_1$ Spaces}
It is  known \cite{arnold2005quadrilateral} that quadrilateral $RT_k,k\ge 0$
spaces do not have optimal rate of convergence in the $\bH(\div)$
norm, unless one imposes an almost parallelogram condition. Such a
condition is too restrictive and our goal here is to create a
better velocity space $\bV_h=MRT_1$ that attains optimal
order convergence rate. This space should not differ too much
from $RT_1$. Given each element in $RT_1$ we shall construct
an accompanying element by adding four bubble like functions so
that $\div{\bf V}_h$ is contained in $W_h=\{ w:\Omega\to R: \hat
w\in Q_{1,1}\}$. Notice that we do not want to change the pressure
space and it is still the same $W_h$ corresponding to the $RT$
space, and furthermore the dimensions of $MRT_1$ and
$RT_1$ are the same. We mention that once the construction
process is illustrated for $RT_1$, the case of $RT_0$ is the
same and actually much far easier.

 Now we want to choose the correction bubble functions so that
$\div{\bf V}_h\subset W_h$. To decide what to add, let's look at
the following table
\[\begin{array}
{llll} 1&\hx&\hx^2&\hx^3\\ \hy&\hx\hy&\hx^2\hy&\hx^3\hy\\
\hy^2&\hx\hy^2&\hx^2\hy^2&\hx^3\hy^2\\
\hy^3&\hx\hy^3&\hx^2\hy^3&\hx^3\hy^3
\end{array}
\]
and take a cue on how $MRT_0$ was obtained. Recall in
Section one the modified $RT_0$ was obtained from $RT_0$ by
adding $\hx^2$ and $\hy^2$ terms, and the desire of keeping the
degrees of freedom intact led to the choice of $\hx(\hx-1)$ and
$\hy(\hy-1)$. In doing so, we used all the terms on the left of
the diagonal line connecting $\hx^2$ and $\hy^2$ except for the
$\hx\hy$ term. So for modifying $RT_1$ we would conjecture to
add those terms on the left of the diagonal line connecting
$\hx^3\hy$ and $\hx\hy^3$ except for the $\hx^2\hy^2$ term. Thus
we add
\[ {\hat  x}^3{\hat  y},{\hat  x}^2{\hat  y},{\hat  x}^3,{\hat  y}^3{\hat  x},
{\hat  y}^2{\hat  x},{\hat  y}^3\] and the desire of not changing
the degrees of freedom leads us to using bubble functions that
vanish on the boundary. It is remarkable that this conjecture
turns out to be true and we now elaborate. Let $\hat \bu=(\hat
u,\hat v)\in RT_1(\hat Q)$ where
\begin{align}\label{original}
 \hat u&=a+b{\hat x}+c{\hat x}^2+d{\hat y}+e{\hat
x}{\hat y}+f{\hat x}^2{\hat y},\\ \label{original2}\hat
v&=A+B{\hat y}+C{\hat y}^2+D{\hat x}+E{\hat x}{\hat y}+F{\hat
x}{\hat y}^2.
\end{align}
We define a modified velocity $\hat \bu_M=(\hat u_M,\hat v_M)$
where
\begin{align}
\label{new}\hat u_M =a+b{\hat x}+c{\hat x}^2+d{\hat y}+e{\hat
x}{\hat y}+f{\hat x}^2{\hat y}+H_1{\hat x}^2({\hat x}-1){\hat
y}+G_1{\hat x}^2({\hat x}-1),\\
\label{new2}\hat v_M=A+B{\hat y}+C{\hat y}^2+D{\hat x}+E{\hat
x}{\hat y}+F{\hat x}{\hat y}^2+H_2{\hat y}^2({\hat y}-1){\hat
x}+G_2{\hat y}^2({\hat y}-1).
\end{align}
Hence
\begin{align*}
\div\hat\bu_M&= b+2c{\hat x}+e{\hat y}+2f{\hat x}{\hat
y}+H_1(3{\hat x}^2-2{\hat x}){\hat y}+G_1(3{\hat x}^2-2{\hat x})\\
&+B+2C{\hat y}+E{\hat x}+2F{\hat x}{\hat y}+H_2(3{\hat
y}^2-2{\hat y}){\hat x}+G_2(3{\hat y}^2-2{\hat y}).
\end{align*}
Let us now impose the condition $\frac 1J \div\hat
\bu_M=\div\bu_M\in W_1(Q)$, the local pressure space (recall
$\bu_M={\mathcal P}_Q\hat\bu_M)$, so that on $\hat Q$
\[ \div\hat \bu_M=J \times \mbox{ a bilinear function},\]
i.e.,
\begin{equation}\label{primary} \div{\hat \bu_M}=(\alpha+\beta
 {\hat x}+\gamma
 {\hat y})(t_1+t_2{\hat x}+t_3{\hat y}+t_4{\hat x}{\hat y})
\end{equation}
for some bilinear function $t_1+t_2{\hat x}+t_3{\hat y}+t_4{\hat
x}{\hat y}.$ The central idea is to find
$t_i,i=1,\ldots,4,H_i,i=1,2,$ and $G_i,i=1,2$ in terms of
$b,c,e,f,B,C,E,F$ and $\alpha,\beta,\gamma$. Now the right hand
side of (\ref{primary}) is
\begin{eqnarray*}
RHS&=& \alpha t_1+(\alpha t_2+\beta t_1){\hat x}+(\alpha
t_3+\gamma t_1){\hat y}+(\alpha t_4+\beta t_3+\gamma t_2){\hat
x}{\hat y}\\ &&+\beta t_2{\hat x}^2+\beta t_4{\hat x}^2{\hat y}+
\gamma t_3{\hat y}^2+\gamma t_4{\hat x}{\hat y}^2,
\end{eqnarray*}
while the left hand side is
\begin{eqnarray*}
LHS&=& (b+B)+(2c+E-2G_1){\hat x}+(2C+e-2G_2){\hat
y}\\&&+(2f+2F-2H_1-2H_2){\hat x}{\hat y}\\ &&+3G_1{\hat x}^2+
3H_1{\hat x}^2{\hat y}+ 3G_2{\hat y}^2+3H_2{\hat x}{\hat y}^2.
\end{eqnarray*}
Equating leads to
\begin{align}\label{sett1}
 \alpha t_1&=b+B, \\\label{sett2}
\alpha t_2+\beta t_1 &=2c+E-2G_1,
\\\label{sett3}
\alpha t_3+\gamma t_1&=e+2C-2G_2,\\\label{sett4} \alpha t_4+\beta
t_3+\gamma t_2&=2f+2F-2H_1-2H_2,
\\\label{sett5}
\beta t_2&=3G_1,\\\label{sett6}\beta t_4 &=3H_1,\\\label{sett7}
\gamma t_3&= 3G_2,\\ \label{sett8}\gamma t_4&=3H_2.
\end{align}
We can express the $H_i$ and $G_i$ values in the first four
equations by the $t$ values in the last four equations. Thus
\begin{align}\label{t1}
t_1&=\frac{b+B}{\alpha},\\\label{t2}
t_2&=\frac{2c+E-\beta(b+B)/\alpha}{\alpha+\frac{2}{3}\beta},\\\label{t3}
t_3&=\frac{e+2C-\gamma(b+B)/\alpha}{\alpha+\frac{2}{3}\gamma},\\\label{t4} t_4&=\frac{2f+2F-\beta t_3-\gamma
t_2}{\alpha+\frac{2\beta}{3}+\frac{2\gamma}{3}},\\ \label{H1}
 H_1&=\frac 13\beta
t_4,\\\label{H2} H_2&=\frac 13 \gamma t_4,\\ \label{G1}
G_1&=\frac 13\beta t_2,\\ \label{G2} G_2&= \frac{1}{3}\gamma t_3.
\end{align}
\rev{
We note that the quantities
\[
\alpha,\qquad
\alpha+\frac23\beta,\qquad
\alpha+\frac23\gamma,\qquad
\alpha+\frac23\beta+\frac23\gamma
\]
appearing in \eqref{t1}--\eqref{t4} have a natural geometric interpretation.
Since
\[
J_Q(\hat x,\hat y)=\alpha+\beta\hat x+\gamma\hat y,
\]
we have
\[
\alpha
=
J_Q(0,0),
\qquad
\alpha+\frac23\beta
=
J_Q\!\left(\frac23,0\right),
\]
\[
\alpha+\frac23\gamma
=
J_Q\!\left(0,\frac23\right),
\qquad
\alpha+\frac23\beta+\frac23\gamma
=
J_Q\!\left(\frac23,\frac23\right).
\]
Since $J_Q(\hat x,\hat y)>0$ on $\hat Q$ for convex quadrilateral
elements satisfying the shape--regularity assumption, these quantities
are uniformly positive and bounded away from zero. Therefore the
correction coefficients in \eqref{t1}--\eqref{t4} are well-defined
and uniformly controlled.
}
In summary, for each $Q$ we have from $ RT_1(\hQ)$ a modified
local space  $MRT_1(\hQ,Q):=\{\hat\bu_M:\,
\hat\bu_M:\hQ\to R \mbox{ as defined above}, \mbox{ given }
\hat\bu\in RT_1(\hQ)\}$. The space $MRT_1(\hQ, Q)$
depends on $Q$ through the geometric quantities $\alpha,\beta$ and
$\gamma$. Nevertheless, we can still construct as usual the space
$MRT_1(Q):=\{\bu_M:Q\to R^2:\bu_M={\cal P}_Q\hat\bu_M,
\hat \bu_M\in MRT_1(\hQ,Q)\}$ Finally, we can define
\[  MRT_1=\{\bu\in \bH(\div;\Omega):
\bu|_Q\in MRT_1(Q)\quad\forall Q\in{\mathcal Q}_h\}\] and
we see that the property
\[ \div MRT_1\subset W_h\]
holds.
\subsection{Intersection of $MRT_k(\hQ)$ and
$RT_k(\hQ)$}For simplicity, let us denote $MRT_k(\hQ,
Q)$ as $MRT_k(\hQ)$, when no confusion can arise. We next
show that for $k=0,1$ the subspace of divergence free functions in
$RT_k(\hQ)$ is contained in $MRT_k(\hQ)$ and vice versa.
\begin{lemma}\label{Lemma3.0} For $k=0,1$ the following two sets are equal:
\[\{\hat\bu\in RT_k(\hQ): \div \hat\bu=0\}=\{\hat\bu_M\in
MRT_k(\hQ): \div \hat\bu_M=0\}.\] Furthermore, for these
two sets $\hat\bu=\hat\bu_M$.
\end{lemma}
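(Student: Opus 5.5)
The plan is to work directly with the explicit parametrizations. Every element of $MRT_k(\hQ)$ has the form $\hat\bu_M$ built from a unique $\hat\bu\in RT_k(\hQ)$. The added correction terms are determined linearly by the coefficients of $\hat\bu$, through the auxiliary quantities $t_i$. For $k=0$ these are $H_1,H_2$; for $k=1$ they are $H_1,H_2,G_1,G_2$. The key observation I will establish is the equivalence
\[
\div\hat\bu_M=0 \iff \div\hat\bu=0 ,
\]
and that in either case all correction coefficients vanish, so $\hat\bu_M=\hat\bu$. Both set inclusions then follow at once.

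For $k=0$, I will use the construction in Section~\ref{sub}. By that construction,
\[
\div\hat\bu_M = J\,t_0 \quad\text{with}\quad t_0=\frac{b+d}{|Q|},
\qquad \div\hat\bu=b+d .
\]
Since $J>0$ on $\hQ$ by (\ref{convex}), $\div\hat\bu_M=0$ holds if and only if $t_0=0$, that is, if and only if $b+d=0$, that is, if and only if $\div\hat\bu=0$. In that case $H_1=\frac{\beta}{2}t_0=0$ and $H_2=\frac{\gamma}{2}t_0=0$, hence $\hat\bu_M=\hat\bu$.

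For $k=1$, I will use (\ref{primary}), which gives
\[
\div\hat\bu_M=J\,(t_1+t_2\hx+t_3\hy+t_4\hx\hy).
\]
Since $J>0$ on $\hQ$, $\div\hat\bu_M\equiv 0$ is equivalent to $t_1=t_2=t_3=t_4=0$. By (\ref{H1})--(\ref{G2}) this forces $H_1=H_2=G_1=G_2=0$, so $\hat\bu_M=\hat\bu$ and $\div\hat\bu=0$.

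Conversely, suppose $\div\hat\bu=0$ for $\hat\bu\in RT_1(\hQ)$. Computing $\div\hat\bu$ and equating coefficients gives
\[
b+B=0,\qquad 2c+E=0,\qquad e+2C=0,\qquad 2f+2F=0 .
\]
Substituting these successively into (\ref{t1})--(\ref{t4}) yields $t_1=0$, then $t_2=t_3=0$, then $t_4=0$. Here I use that the denominators $\alpha$, $\alpha+\frac23\beta$, $\alpha+\frac23\gamma$ and $\alpha+\frac23\beta+\frac23\gamma$ are values of $J$ at points of $\hQ$, so they are positive and the formulas are well defined. Hence every correction coefficient vanishes, $\hat\bu_M=\hat\bu$, and in particular $\div\hat\bu_M=0$.

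The main point requiring care is that the map $\hat\bu\mapsto\hat\bu_M$ is well defined and that the coefficients $t_i$ are uniquely determined; otherwise vanishing of $\div\hat\bu_M$ would not force the $t_i$ to vanish. Uniqueness of the $t_i$ follows because $J$ is a nonzero polynomial, so the factorization $\div\hat\bu_M=J\cdot(\text{bilinear})$ has a unique bilinear factor. Alternatively, it follows from solving the triangular system (\ref{sett1})--(\ref{sett8}) with nonzero pivots, which gives (\ref{t1})--(\ref{t4}). With this settled, the remaining steps are direct substitutions.
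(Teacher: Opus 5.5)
Your proposal is correct and follows essentially the same argument as the paper. The factorization $\div\hat\bu_M=J\,(t_1+t_2\hat x+t_3\hat y+t_4\hat x\hat y)$ with $J>0$ forces all $t_i$, and hence all $H_i,G_i$, to vanish. Conversely, $\div\hat\bu=0$ gives $b+B=2c+E=e+2C=2f+2F=0$, which makes every correction coefficient zero. Your explicit treatment of the $k=0$ case via $t_0=(b+d)/|Q|$, and your check that the denominators $\alpha,\ \alpha+\frac23\beta,\ \alpha+\frac23\gamma,\ \alpha+\frac23\beta+\frac23\gamma$ are positive, simply fill in details the paper leaves implicit.
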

\begin{proof}
One can get the conclusion quite easily by using (\ref{def}) for
the case of $k=0$. For $k=1$, let $\hat\bu_M$ be in $MRT_1(\hQ)$ and $\div\hat\bu_M=0$. Then by \eqref{primary}, all
$t_i$ are zero. Hence by \eqref{sett1}--\eqref{sett8}, we have all
$H_i$ and $G_i$ values equal to zero and
\begin{align}\label{temp} b+B&=0, \qquad 2c+E=0,\\\label{temp2}
e+2C&=0,\qquad 2f+2F=0.
\end{align}
Hence $\hat\bu_M$ is also in $RT_1(\hQ)$ and
$\hat\bu_M=\hat\bu$. Conversely, if $\hat\bu\in RT_1(\hQ)$ and
$\div\hat\bu=0$, then we have \eqref{temp}--\eqref{temp2}. Using
\eqref{sett1}--\eqref{sett8}, we can see that all $H_i$ and $G_i$
values are zero. Hence $\hat\bu=\hat\bu_M$ and $\div\hat\bu_M=0$
as well. This completes the proof.

\end{proof}

\begin{remark}\label{Remark3.0} It is now easy to see the curl
 of a biquadratic polynomial lies in
the intersection of $MRT_1(\hat Q)$ and $RT_1(\hat Q)$.
This result will be used later when we invoke a Bramble-Hilbert
type lemma.
\end{remark}
The above lemma can be used to see that some of the basis
functions of $RT_0(\hQ)$ can be retained for ${\bf
M}RT_0(\hQ)$.
 To generate a basis function for $MRT_1$ by modifying
an existing one, it is helpful to have the following lemma in
mind.
\begin{lemma}
Let $\hat\bu$ be in $MRT_1(\hQ)\cap RT_1(\hQ)$. Then
$\int_{\hQ}\div\hat\bu d\hx d\hy=0$ if and only if $\div\hat\bu=0$
on $\hQ$.
\end{lemma}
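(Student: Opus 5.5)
The plan is to use the two characterizations of divergence available for an element $\hat\bu$ of the intersection. Since $\hat\bu\in MRT_1(\hQ)$, \eqref{primary} gives $\div\hat\bu=J_Q\,t$ with $t=t_1+t_2\hx+t_3\hy+t_4\hx\hy$ bilinear. Since $\hat\bu\in RT_1(\hQ)$, $\div\hat\bu\in Q_{1,1}$, so it has no $\hx^2$, $\hy^2$, $\hx^2\hy$ or $\hx\hy^2$ terms. The nontrivial direction is ``integral zero implies divergence zero''; the converse is immediate.

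\textbf{Step 1: constrain $t$.} Expand $J_Q t$ as in the computation of the right-hand side of \eqref{primary}. The coefficients of $\hx^2,\hx^2\hy,\hy^2,\hx\hy^2$ are $\beta t_2,\beta t_4,\gamma t_3,\gamma t_4$, and they must vanish. There are then two cases.
\begin{itemize}
\item If $\beta\neq0$ and $\gamma\neq0$, this forces $t_2=t_3=t_4=0$, so $\div\hat\bu=t_1J_Q$.
\item If $\beta=0$ and $\gamma\ne0$ (or symmetrically), we only get $t_3=t_4=0$. Then $J_Q=\alpha+\gamma\hy$, and $\div\hat\bu=(\alpha+\gamma\hy)(t_1+t_2\hx)$ is still bilinear.
\end{itemize}
The parallelogram case $\beta=\gamma=0$ leaves $t$ unconstrained.

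The coefficients of the modified element itself give a cleaner route. Write $\hat\bu=\hat\bu_M$, obtained from some RT$_1$ data, with correction coefficients $H_i,G_i$. Belonging to $RT_1(\hQ)$ requires that the cubic bubble terms $H_1\hx^2(\hx-1)\hy$ etc.\ be absent, i.e.\ $H_i=G_i=0$. Then \eqref{sett5}--\eqref{sett8} give $\beta t_2=\beta t_4=\gamma t_3=\gamma t_4=0$, the same constraint as above. So we are led back to the case analysis, and the degenerate cases are the main obstacle.

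\textbf{Step 2: the generic case.} For $\beta,\gamma$ not both zero with $t=t_1$ constant, we have $\int_{\hQ}\div\hat\bu=t_1\int_{\hQ}J_Q=t_1|Q|$ by \eqref{area}. Since $|Q|>0$, a zero integral forces $t_1=0$, hence $\div\hat\bu=0$.

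\textbf{Step 3: the degenerate cases.} When $\beta=0$ or $\gamma=0$, $t$ keeps extra freedom and the integral condition alone cannot kill it. For instance, with $\beta=\gamma=0$ the field $\hat\bu=(\hx-\tfrac12,0)$ lies in both spaces with zero divergence integral. So the statement as written can only hold under a genericity assumption.

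I would therefore first check whether the intended reading is the strictly non-parallelogram-type situation $\beta\gamma\neq0$, and prove it there by Steps 1--2. For the general case I would note that the claim must be weakened, e.g.\ to ``$\div\hat\bu=t_1J_Q$ with $t_1=0$'' only when $\beta\gamma\ne0$. Identifying and handling these degenerate cases is the main difficulty I anticipate.
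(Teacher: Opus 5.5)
Your Steps 1--2 are the paper's own argument. The paper uses $H_i=G_i=0$ together with \eqref{t1}--\eqref{G2} to get \eqref{tem1}--\eqref{tem3}, which amounts to $t_2=t_3=t_4=0$. Hence $\div\hat\bu=t_1J_Q$ and $\int_{\hQ}\div\hat\bu\,d\hx\,d\hy=|Q|(b+B)/\alpha$, exactly as you have it. Your worry about degenerate geometry is justified, and it applies to the paper's proof too. Passing from $G_1=\frac13\beta t_2=0$ to \eqref{tem1} silently requires $\beta\neq0$, and \eqref{tem2} likewise requires $\gamma\neq0$. The lemma therefore holds precisely when $\beta\gamma\neq0$. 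This exceptional case is not exotic: $\beta=0$ exactly when $\bg\parallel\bx_{21}$, i.e.\ for trapezoids with $\bx_1\bx_2\parallel\bx_4\bx_3$, which are perfectly shape-regular.

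The concrete error is your counterexample. For $\hat\bu=(\hx-\tfrac12,0)$ you have $\div\hat\bu=1$, so the divergence integral is $1$, not $0$; perhaps you checked the mean of the field rather than of its divergence. A valid counterexample for any quadrilateral with $\beta=0$, parallelograms included, is $\hat\bu=\bigl((\alpha+\gamma\hy)(\hx^2-\hx),0\bigr)\in RT_1(\hQ)$. Its coefficients are $b=-\alpha$, $c=\alpha$, $e=-\gamma$, $f=\gamma$, with all others zero. Then \eqref{t1}--\eqref{t4} give $t_1=-1$, $t_2=2$, $t_3=t_4=0$, so every $H_i$ and $G_i$ vanishes and $\hat\bu\in MRT_1(\hQ)\cap RT_1(\hQ)$. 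Yet $\div\hat\bu=(\alpha+\gamma\hy)(2\hx-1)$ is not identically zero, while its integral over $\hQ$ is zero. The case $\gamma=0$ is handled symmetrically by $\bigl(0,(\alpha+\beta\hx)(\hy^2-\hy)\bigr)$. Also fix the hypothesis in Step 2: $t$ reduces to the constant $t_1$ only when $\beta\gamma\neq0$, not when $\beta,\gamma$ are merely ``not both zero''. With these two corrections your proposal gives a complete account: a proof for $\beta\gamma\neq0$ and counterexamples otherwise.
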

\begin{proof}
We show the `only if' part. Using \eqref{original}--\eqref{new2},
\eqref{t1}--\eqref{G2}, and the fact that all the $H_i$ and $G_i$
values are zero, we have
\begin{align}\label{tem1}2c+E&=\frac\beta\alpha(b+B)\\\label{tem2}
e+2C&=\frac\gamma\alpha(b+B)\\ \label{tem3}2f+2F&=0.
\end{align}
%Changed 12/30/03
Applying the above relations we have
\[\int_{\hQ}\div\hat\bu_M d\hx d\hy=\frac{|Q|(b+B)}{\alpha}=0\]
and hence $b+B=0$. This together with \eqref{tem1}--\eqref{tem3}
imply $\div\hat\bv=0$.
\end{proof}

%%%%%%%%%%%%%%%%%%%%%%%%%%%%%%%%%
\section{Interpolation and Approximation Properties}

In this section we study the approximation properties of the
modified Raviart--Thomas spaces introduced in the previous
section. We first define the associated interpolation operator
and establish the corresponding degrees of freedom. These
properties are then used to derive approximation estimates and
the convergence result for the mixed finite element method.

Given a vector field ${\bf {\hat u}}$ on the reference element
$\hat Q$ we shall show that for $k=0$ or $k=1$ there is a unique
$\hat \Pi {\bf \hat u}\in MRT_k({\hat Q})$ such that for
any edge $\hat e$ of $\hQ$
\begin{align}
\label{dof1}
\int_{\hat e}({\bf \hat u}-\hat \Pi {\bf \hat u})\cdot\hat {\bf
n}wd\sigma=0,\qquad \mbox{ for all }w\in P_i(\hat e),\, i=0,1,
\\
\label{dof2}
\int_{\hat Q}({\bf \hat u}-\hat \Pi {\bf \hat
u})\cdot\hat {\bf v}dxdy=0,\qquad \mbox{ for all }\hat {\bf v}\in
Q_{k-1,k}\times Q_{k,k-1} \ (k=1).
\end{align}

\rev{

\begin{lemma}\label{Lemma3.DOF}
Equations (\ref{dof1})--(\ref{dof2}) define a unisolvent set
of degrees of freedom for the space $MRT_k(\hat Q),k=0,1$.
\end{lemma}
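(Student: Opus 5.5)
The plan is the usual one for finite elements. First count dimensions, then show that a function in $MRT_k(\hat Q)$ whose degrees of freedom (\ref{dof1})--(\ref{dof2}) all vanish must be identically zero. For the interpretation of (\ref{dof1}) I will take $w\in P_k(\hat e)$, matching (\ref{interp}). The main structural tool is the compatibility relation (\ref{primary}), namely $\div\hat\bu_M=J\,t$ with $t\in Q_{k,k}$, combined with $J>0$ on $\hat Q$. Once the divergence is shown to vanish, Lemma~\ref{Lemma3.0} reduces everything to the known unisolvence of $RT_k(\hat Q)$.

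\emph{Step 1 (dimension).} The map $\hat\bu\mapsto\hat\bu_M$ from $RT_k(\hat Q)$ onto $MRT_k(\hat Q)$ is linear. This holds because $t_i,H_i,G_i$ in (\ref{t1})--(\ref{G2}), and $t_0,H_1,H_2$ for $k=0$, depend linearly on the coefficients of $\hat\bu$. The map is also injective. Indeed, if $\hat\bu_M=0$, then $\div\hat\bu_M=0$, so by Lemma~\ref{Lemma3.0} we have $\hat\bu=\hat\bu_M=0$. Hence $\dim MRT_k(\hat Q)=\dim RT_k(\hat Q)$, which is $4$ for $k=0$ and $12$ for $k=1$. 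The number of functionals is also $4$ and $12$ respectively: for $k=1$ there are $4\times 2$ edge moments and $\dim(Q_{0,1}\times Q_{1,0})=4$ interior moments. It therefore suffices to prove injectivity of the degrees of freedom.

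\emph{Step 2 (vanishing divergence).} Let $\hat\bu_M\in MRT_k(\hat Q)$ have all degrees of freedom zero. For any $w\in Q_{k,k}$, integrate by parts:
\[
\int_{\hat Q}\div\hat\bu_M\,w\,d\hx d\hy=\sum_{\hat e}\int_{\hat e}\hat\bu_M\cdot\hat\bn\,w\,d\sigma-\int_{\hat Q}\hat\bu_M\cdot\Grad w\,d\hx d\hy .
\]
On each edge $w|_{\hat e}\in P_k(\hat e)$, and $\Grad w\in Q_{k-1,k}\times Q_{k,k-1}$, which is zero when $k=0$. By (\ref{dof1})--(\ref{dof2}) both terms on the right vanish, so $\div\hat\bu_M$ is $L^2(\hat Q)$-orthogonal to $Q_{k,k}$. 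By (\ref{primary}) we have $\div\hat\bu_M=J\,t$ with $t\in Q_{k,k}$; for $k=0$ this is $t=t_0$. Choosing $w=t$ gives
\[
\int_{\hat Q}J\,t^2\,d\hx d\hy=0 .
\]
Since $J>0$ on $\hat Q$ by convexity, $t\equiv 0$, and therefore $\div\hat\bu_M=0$.

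\emph{Step 3 (reduction to $RT_k$).} By Lemma~\ref{Lemma3.0}, $\hat\bu_M$ then lies in $RT_k(\hat Q)$ and coincides with its unmodified counterpart. Its degrees of freedom (\ref{dof1})--(\ref{dof2}) are exactly the classical $RT_k$ degrees of freedom (\ref{interp})--(\ref{interp1}), and these vanish. Classical unisolvence of $RT_k(\hat Q)$ gives $\hat\bu_M=0$.

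I expect the only delicate point to be Step 2. There one must check that the test spaces in (\ref{dof1})--(\ref{dof2}) are exactly rich enough to annihilate every $w\in Q_{k,k}$ through the integration by parts. That is the case because $\Grad Q_{k,k}\subset Q_{k-1,k}\times Q_{k,k-1}$ and the traces of $Q_{k,k}$ on the edges lie in $P_k$. One should resist trying to show directly that the bubble terms leave the degrees of freedom unchanged: the edge moments are indeed unaffected, but the interior moments are not, since $\int_0^1\hx^2(\hx-1)\,d\hx\neq 0$. The divergence argument sidesteps this issue entirely.
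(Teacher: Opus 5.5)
Your proof is correct, and it takes a genuinely different route from the paper's.

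The paper handles $k=1$ by bookkeeping on the coefficients. The edge moments give $a=d=A=D=0$, $c=-b$, $f=-e$, $C=-B$, $F=-E$. The formulas (\ref{t1})--(\ref{G2}) are then substituted into four interior moments, which gives a $4\times4$ homogeneous system in $(b,e,B,E)$ whose determinant is evaluated symbolically. The case $k=0$ is simply declared immediate.

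You instead run the integration-by-parts identity behind (\ref{div11}) in reverse to get $\div\hat\bu_M\perp Q_{k,k}$. You then use the built-in factorization $\div\hat\bu_M=J\,t$, $t\in Q_{k,k}$, by testing with $w=t$. Finally, Lemma~\ref{Lemma3.0} reduces everything to classical $RT_k$ unisolvence.

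What your route buys:
\begin{itemize}
\item It needs no symbolic computation and treats $k=0$ and $k=1$ the same way.
\item It uses only $J>0$. In the paper, positivity of $\alpha,\alpha_1,\alpha_2,\alpha_3$ controls only the denominator of $\det M$, so the numerator must be checked separately.
\item It carries over to any order for which a dimension-preserving modification satisfies $\div MRT_k(\hat Q)\subset J\,Q_{k,k}$ together with the analogue of Lemma~\ref{Lemma3.0}.
\item It shows why the interior test space must contain $\Grad Q_{k,k}$. For $k=1$ the right test functions are therefore $(1,0)^T,(\hat y,0)^T,(0,1)^T,(0,\hat x)^T$. The paper's computation uses $(\hat x,0)^T$ and $(0,\hat y)^T$, which do not lie in $Q_{0,1}\times Q_{1,0}$.
\end{itemize}
Your count $\dim MRT_1(\hat Q)=12$ is also the correct one. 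The paper's text says eight, although its own system has twelve unknowns and twelve equations.

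What the explicit computation offers in return is, in principle, a quantitative dependence of the degree-of-freedom matrix on the geometry. That is what uniform-in-$Q$ bounds for $\hat\Pi$ require. With your qualitative argument, the same uniformity follows from two facts. First, the matrix depends continuously on $(\beta/\alpha,\gamma/\alpha)$. Second, shape regularity confines these ratios to a compact set on which $J/\alpha$ stays bounded below on $\hat Q$, and your argument gives invertibility at every point of that set.
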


\begin{proof}
For $k=0$ the conclusion is immediate, since $MRT_0(\hat Q)$
has the same basis structure and the same number of degrees of freedom
as the classical $RT_0$ element.

We now consider the case $k=1$.
The number of functionals in \eqref{dof1}--\eqref{dof2} coincides with the
dimension of $MRT_1(\hat Q)$, which is eight. Hence it suffices to show that
if all degrees of freedom vanish, then the corresponding function is identically
zero.

Let
\[
{\bf \hat u}_M=(\hat u_M,\hat v_M)\in MRT_1(\hat Q)
\]
be given by
\begin{align}
\hat u_M
&=
a+b\hat x+c\hat x^2+d\hat y
+e\hat x\hat y
+f\hat x^2\hat y
+H_1\hat x^2(\hat x-1)\hat y
+G_1\hat x^2(\hat x-1),
\\
\hat v_M
&=
A+B\hat y+C\hat y^2+D\hat x
+E\hat x\hat y
+F\hat x\hat y^2
+H_2\hat y^2(\hat y-1)\hat x
+G_2\hat y^2(\hat y-1).
\end{align}

Assume that all degrees of freedom in \eqref{dof1}--\eqref{dof2} vanish.
We derive the resulting homogeneous linear system for the coefficients.

On the edge $\hat y=0$, the normal vector is $(0,-1)^T$.
Hence
\[
\hat v_M(\hat x,0)=A+D\hat x.
\]
Using \eqref{dof1} with $w=1$ and $w=\hat x$, we obtain
\[
\int_0^1 (A+D\hat x)\,d\hat x=0,
\qquad
\int_0^1 (A+D\hat x)\hat x\,d\hat x=0.
\]
Thus
\[
A+\frac12D=0,
\qquad
\frac12A+\frac13D=0.
\]
Solving gives
\[
A=D=0.
\]

Similarly, on the edge $\hat x=0$ we have
\[
\hat u_M(0,\hat y)=a+d\hat y.
\]
Again using \eqref{dof1} with $w=1$ and $w=\hat y$, we obtain
\[
a=d=0.
\]

Next consider the edge $\hat y=1$.
Since
\[
\hat v_M(\hat x,1)
=
A+B+C+(D+E+F)\hat x,
\]
the vanishing moments imply
\[
A+B+C+\frac12(D+E+F)=0,
\]
\[
\frac12(A+B+C)+\frac13(D+E+F)=0.
\]
Using $A=D=0$, we obtain
\[
B+C+\frac12(E+F)=0,
\]
\[
\frac12(B+C)+\frac13(E+F)=0.
\]
Hence
\[
B+C=0,
\qquad
E+F=0.
\]

Finally, on the edge $\hat x=1$ we have
\[
\hat u_M(1,\hat y)
=
a+b+c+(d+e+f)\hat y.
\]
Using $a=d=0$ and imposing the vanishing moments gives
\[
b+c+\frac12(e+f)=0,
\]
\[
\frac12(b+c)+\frac13(e+f)=0.
\]
Thus
\[
b+c=0,
\qquad
e+f=0.
\]

We next use the interior moments \eqref{dof2}.
Choosing successively the basis functions
\[
(1,0)^T,\quad
(\hat x,0)^T,\quad
(0,1)^T,\quad
(0,\hat y)^T,
\]
we obtain four additional equations.

First,
\[
\int_{\hat Q}\hat u_M\,d\hat x\,d\hat y=0.
\]
Substituting the expressions above and using
\[
a=d=0,\qquad b+c=0,\qquad e+f=0,
\]
gives
\[
\frac16 b+\frac1{12}e-\frac1{12}G_1-\frac1{24}H_1=0.
\]

Second,
\[
\int_{\hat Q}\hat x\,\hat u_M\,d\hat x\,d\hat y=0,
\]
which yields
\[
\frac1{12}b+\frac1{24}e-\frac1{15}G_1-\frac1{30}H_1=0.
\]

Third,
\[
\int_{\hat Q}\hat v_M\,d\hat x\,d\hat y=0.
\]
Using
\[
A=D=0,\qquad B+C=0,\qquad E+F=0,
\]
we obtain
\[
\frac16 B+\frac1{12}E-\frac1{12}G_2-\frac1{24}H_2=0.
\]

Fourth,
\[
\int_{\hat Q}\hat y\,\hat v_M\,d\hat x\,d\hat y=0,
\]
which gives
\[
\frac1{12}B+\frac1{24}E-\frac1{15}G_2-\frac1{30}H_2=0.
\]

Using the defining relations
\begin{align}
G_1 &= \frac13\beta t_2,
&
G_2 &= \frac13\gamma t_3,
\\
H_1 &= \frac13\beta t_4,
&
H_2 &= \frac13\gamma t_4,
\end{align}
together with
\begin{align}
t_2 &=
\frac{2c+E-\beta(b+B)/\alpha}
{\alpha+\frac23\beta},
\\
t_3 &=
\frac{e+2C-\gamma(b+B)/\alpha}
{\alpha+\frac23\gamma},
\\
t_4 &=
\frac{2f+2F-\beta t_3-\gamma t_2}
{\alpha+\frac23\beta+\frac23\gamma},
\end{align}
and recalling
\[
c=-b,\qquad
C=-B,\qquad
f=-e,\qquad
F=-E,
\]
we obtain a homogeneous linear system
\[
M
\begin{pmatrix}
b\\ e\\ B\\ E
\end{pmatrix}
=0,
\]
where
\[
M=
\begin{pmatrix}
\frac16+\frac{\beta^2}{18\alpha_1}
&
\frac1{12}+\frac{\beta\gamma}{72\alpha_3}
&
-\frac{\beta^2}{18\alpha\alpha_1}
&
-\frac{\beta}{36\alpha_3}
\\[2mm]
\frac1{12}+\frac{2\beta^2}{45\alpha_1}
&
\frac1{24}+\frac{\beta\gamma}{45\alpha_3}
&
-\frac{2\beta^2}{45\alpha\alpha_1}
&
-\frac{\beta}{45\alpha_3}
\\[2mm]
-\frac{\beta\gamma}{36\alpha_3}
&
-\frac{\gamma^2}{18\alpha\alpha_2}
&
\frac16+\frac{\gamma^2}{18\alpha_2}
&
\frac1{12}+\frac{\beta\gamma}{72\alpha_3}
\\[2mm]
-\frac{\beta\gamma}{45\alpha_3}
&
-\frac{2\gamma^2}{45\alpha\alpha_2}
&
\frac1{12}+\frac{2\gamma^2}{45\alpha_2}
&
\frac1{24}+\frac{\beta\gamma}{45\alpha_3}
\end{pmatrix},
\]
with
\[
\alpha_1=\alpha+\frac23\beta,\qquad
\alpha_2=\alpha+\frac23\gamma,\qquad
\alpha_3=\alpha+\frac23\beta+\frac23\gamma.
\]

A direct symbolic computation yields
\[
\det(M)
=
\frac{
\left(
9\alpha^2+6\alpha\beta+6\alpha\gamma+4\beta\gamma
\right)^2
}{
466560\,\alpha^2\alpha_1^2\alpha_2^2\alpha_3^2
}.
\]

Since
\[
J_Q(\widehat x,\widehat y)
=
\alpha+\beta\widehat x+\gamma\widehat y
>0
\quad\text{on }\widehat Q,
\]
the quantities
\[
\alpha,\qquad
\alpha_1,\qquad
\alpha_2,\qquad
\alpha_3
\]
are uniformly positive under the assumed shape--regularity condition.
Hence
\[
\det(M)\neq0.
\]

The homogeneous system therefore admits only the trivial solution,
\[
b=e=B=E=0.
\]
It follows that
\[
c=f=C=F=0,
\]
and consequently
\[
G_1=G_2=H_1=H_2=0.
\]

Thus all coefficients vanish and
\[
{\bf \hat u}_M\equiv{\bf 0}.
\]
Hence the degrees of freedom
\eqref{dof1}--\eqref{dof2}
are unisolvent for $MRT_1(\hat Q)$.
\end{proof}
} %%%% end of rev
Using these degrees of freedom we define the local interpolation
operator $\Pi_Q$ on each element $Q$ and the global interpolation
operator $\Pi_h$ in the usual manner.
\begin{lemma}
Let $\Pi_h$ be the interpolation operator and $P_h$ the $L^2$
projection onto $W_h$. Then
\[
\div(\Pi_h \bu) = P_h(\div \bu).
\]
\end{lemma}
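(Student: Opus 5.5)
The identity is local, so it suffices to fix an element $Q$ and show $\div(\Pi_Q\bu)=P_Q(\div\bu)$ on $Q$. Two facts drive the proof. First, by construction $\div MRT_k(Q)\subset W_k(Q)$, so $\div\Pi_Q\bu$ already lies in the pressure space. Second, $P_Q$ is characterized by orthogonality. By the change of variables $dxdy=J\,d\hx d\hy$, $P_Q$ is the $L^2(Q)$-orthogonal projection onto $W_k(Q)$: for $w\in W_k(Q)$,
\[
\int_Q (\phi-P_Q\phi)w\,dxdy=\int_{\hQ}(\hphi-\hP\hphi)\hw\,J\,d\hx d\hy .
\]
For $P_Q$ to be the $L^2(Q)$ projection we need $\hP$ to be the $J$-weighted projection. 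I will therefore interpret $P_h$ as the $L^2(Q)$ projection onto $W_h$, which is what the statement calls ``the $L^2$ projection onto $W_h$''. It then suffices to prove
\[
\int_Q \div(\bu-\Pi_Q\bu)\,w\,dxdy=0\qquad\forall w\in W_k(Q).
\]
Once this holds, $\div\Pi_Q\bu\in W_k(Q)$ has the same $W_k(Q)$-moments as $\div\bu$, so it equals $P_Q\div\bu$.

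To verify the orthogonality, pull back to $\hQ$ using (\ref{crux}) and the Piola relation $\bu=\mathcal P_Q\hat\bu$. Since $\div\bu=\frac1J\div\hat\bu$ and $dxdy=J\,d\hx d\hy$, we get
\[
\int_Q \div(\bu-\Pi_Q\bu)\,w\,dxdy=\int_{\hQ}\div(\hat\bu-\hat\Pi\hat\bu)\,\hw\,d\hx d\hy,\qquad \hw\in Q_{k,k}.
\]
Integrating by parts on $\hQ$ gives
\[
\sum_{\hat e}\int_{\hat e}(\hat\bu-\hat\Pi\hat\bu)\cdot\hat\bn\,\hw\,d\sigma-\int_{\hQ}(\hat\bu-\hat\Pi\hat\bu)\cdot\Grad\hw\,d\hx d\hy .
\]
On each edge, $\hw|_{\hat e}\in P_k(\hat e)$, so by (\ref{dof1}) the boundary terms vanish. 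Since $\hw\in Q_{k,k}$, we have $\Grad\hw\in Q_{k-1,k}\times Q_{k,k-1}$. For $k=1$ the interior term vanishes by (\ref{dof2}), and for $k=0$ the gradient is zero. Lemma~\ref{Lemma3.DOF} guarantees that $\hat\Pi$ is well defined.

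The global statement follows by summing over $Q$. Normal continuity of $\Pi_h\bu$ is ensured by the edge degrees of freedom, so $\Pi_h\bu\in\bH(\div;\Omega)$ and its divergence is computed elementwise.

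The main obstacle is the definition of $P_h$. The paper's $\hP$ was written as the unweighted projection on $\hQ$. With that definition, $(\hP\hphi)\circ F^{-1}$ is not the $L^2(Q)$ projection when $J$ is nonconstant, and the identity could fail. I will resolve this by proving the lemma for the $L^2(Q)$-orthogonal projection onto $W_h$, i.e.\ $\hP$ weighted by $J$, matching the words of the statement. I will also remark that (\ref{orthogonality2}) and Lemma~\ref{e2} remain valid for this projection, since it is the genuine $L^2$ projection and $W_k(Q)$ contains the constants mapped from $\hQ$.
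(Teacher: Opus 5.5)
Your proof is correct and follows the same route as the paper: the edge and interior degrees of freedom, together with integration by parts on $\hQ$, give $(\div(\bu-\Pi_h\bu),w)=0$ for all $w\in W_h$, and $\div MRT_k\subset W_h$ then identifies $\div\Pi_h\bu$ with the $L^2$ projection of $\div\bu$. You are also right that this last step needs $P_h$ to be the genuine $L^2(\Omega)$-orthogonal projection (i.e.\ $\hP$ weighted by $J$) rather than the unweighted $\hP$ of Section~2, and the paper's proof uses that assumption without stating it.
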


\begin{proof}
The result follows from the definition of the degrees of freedom
and the construction of the modified basis functions. For any $w_h \in W_h$, using the definition of the degrees of freedom and integration by parts on each element $Q$, we have
\[
\int_Q \operatorname{div}(\Pi_h u)\, w_h
= \int_Q \operatorname{div} u \, w_h.
\]
Since $\operatorname{div}(\Pi_h u), P_h(\operatorname{div} u) \in W_h$, this implies
\[
\operatorname{div}(\Pi_h u) = P_h(\operatorname{div} u).
\]
\end{proof}

\section{Stability and Error Analysis}
In this section we establish the boundedness of the interpolation operator $\Pi_h$
and the stability of the mixed finite element pair
$MRT_k \times W_h$.

Let us first recall that for shape regular partitions we have the
following upper bounds \cite[p. 105]{girault1986finite}:
\begin{equation}\label{bound_jacobian}
\norm{{DF}_Q}_{\infty,\hQ}\le Ch_Q, \qquad
\norm{{DF}_Q^{-1}}_{\infty,Q} \le Ch_Q^{-1}
\end{equation}
where $\|M\|_{\infty,K}:=\sup_{\bx\in K}\|M(x)\|$, the supremum of
the spectral norm of the matrix function $M$. In addition, the
function $J_Q$ satisfies \cite{girault1986finite}:
\begin{equation}\label{bound_det}
\|J_Q\|_{\infty,\widehat Q} \le Ch_Q^2, \qquad
\inf_{(\hat x,\hat y)\in\widehat Q} J_Q(\hat x,\hat y) \ge Ch_Q^2.
\end{equation}
Now we show the following lemma relating seminorms on $Q$ and $\hat Q$.

\begin{lemma}\label{lemma3.1}
Let ${\bf u}=(u,v)^T$ and $\hat{\bf u}=(\hat u,\hat v)^T$ be related by
${\bf u}={\mathcal P}_Q\hat{\bf u}$. For a regular family of quadrilateral
partitions, there exists a constant $C$, independent of $h$, such that
\begin{align}
\label{inq1}
\Big\|\frac{\partial\hat u}{\partial \hat y}\Big\|_{0,\hat Q}
+\Big\|\frac{\partial\hat v}{\partial \hat x}\Big\|_{0,\hat Q}
&\le
Ch_Q\,|{\bf u}|_{1,Q},
\qquad \forall {\bf u}\in {\bf H}^1(Q),\\
\label{inq2}
\Big\|\frac{\partial^2\hat u}{\partial \hat y^2}\Big\|_{0,\hat Q}
+\Big\|\frac{\partial^2\hat v}{\partial \hat x^2}\Big\|_{0,\hat Q}
&\le
Ch_Q^2\,|{\bf u}|_{2,Q},
\qquad \forall {\bf u}\in {\bf H}^2(Q).
\end{align}
\end{lemma}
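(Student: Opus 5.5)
The plan is to write the inverse Piola transform explicitly and exploit the fact that, for a bilinear map, the relevant entries of the adjugate matrix depend on only one reference variable. From \eqref{Piola} we have $\hat{\bf u} = J\,DF^{-1}({\bf u}\circ F) = \operatorname{adj}(DF)\,({\bf u}\circ F)$. Using \eqref{jacobian}, this gives
\[
\hat u = \frac{\p y}{\p\hy}\,(u\circ F) - \frac{\p x}{\p\hy}\,(v\circ F), \qquad
\hat v = -\frac{\p y}{\p\hx}\,(u\circ F) + \frac{\p x}{\p\hx}\,(v\circ F).
\]
The key observation is that $\p F/\p\hy = \bx_{41}+\bg\hx$ depends only on $\hx$, and $\p F/\p\hx = \bx_{21}+\bg\hy$ depends only on $\hy$. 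Hence the coefficients multiplying $u\circ F$ and $v\circ F$ in $\hat u$ are constant in $\hy$. Likewise, the coefficients in $\hat v$ are constant in $\hx$. This is precisely why only the "tangential" derivatives $\p_{\hy}\hat u$ and $\p_{\hx}\hat v$ appear in the lemma.

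Differentiating, $\p_{\hy}\hat u = \frac{\p y}{\p\hy}\,\p_{\hy}(u\circ F) - \frac{\p x}{\p\hy}\,\p_{\hy}(v\circ F)$, and $\p^2_{\hy}\hat u$ has the same form with $\p^2_{\hy}$ in place of $\p_{\hy}$; no derivatives fall on the coefficients. By the chain rule, $\p_{\hy}(u\circ F) = (\Grad u)\circ F\cdot \p_{\hy}F$. For the second derivative,
\[
\p^2_{\hy}(u\circ F) = (D^2u)\circ F\,(\p_{\hy}F,\p_{\hy}F) + (\Grad u)\circ F\cdot \p^2_{\hy}F ,
\]
and the last term vanishes because $F$ is linear in $\hy$ for fixed $\hx$. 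With \eqref{bound_jacobian}, namely $|\p_{\hy}F|\le Ch_Q$, and the coefficient bound $|\p x/\p\hy|, |\p y/\p\hy|\le Ch_Q$, we obtain the pointwise bounds
\[
|\p_{\hy}\hat u|\le Ch_Q^2\,|D{\bf u}|\circ F, \qquad |\p^2_{\hy}\hat u|\le Ch_Q^3\,|D^2{\bf u}|\circ F .
\]
The same argument applied to $\hat v$ with $\hx$ in place of $\hy$ gives the analogous bounds.

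Finally, we change variables back to $Q$. Since $\int_{\hQ}|\phi\circ F|^2\,d\hx\,d\hy = \int_Q|\phi|^2 J^{-1}\circ F^{-1}\,dx\,dy$, the lower bound in \eqref{bound_det} gives $\norm{\phi\circ F}_{0,\hQ}\le Ch_Q^{-1}\norm{\phi}_{0,Q}$. Combining this with the pointwise bounds yields $Ch_Q^2\cdot h_Q^{-1}|{\bf u}|_{1,Q}=Ch_Q|{\bf u}|_{1,Q}$, which is \eqref{inq1}, and $Ch_Q^3\cdot h_Q^{-1}|{\bf u}|_{2,Q}=Ch_Q^2|{\bf u}|_{2,Q}$, which is \eqref{inq2}. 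The argument is first carried out for smooth ${\bf u}$ and then extended by density.

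The only genuinely delicate point is the structural observation in the first step. For the mixed derivatives, for instance $\p_{\hx}\hat u$, the coefficient $\bx_{41}+\bg\hx$ would be differentiated. That would produce a term of size $|\bg|\,|u\circ F|$, which involves zeroth-order information and could not be bounded by the seminorm alone. Similarly, $\p^2_{\hy}F=0$ is essential in \eqref{inq2}, while $\p_{\hx}\p_{\hy}F=\bg\neq 0$ in general. So I must be careful to apply only $\p_{\hy}$ to $\hat u$ and only $\p_{\hx}$ to $\hat v$, and to check that these really annihilate both the adjugate coefficients and the second derivatives of $F$.
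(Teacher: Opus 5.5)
Your proof is correct and follows essentially the same route as the paper. Both write $\hat u = y_{\hat y}\,(u\circ F) - x_{\hat y}\,(v\circ F)$ and $\hat v = -y_{\hat x}\,(u\circ F)+x_{\hat x}\,(v\circ F)$, differentiate only in the tangential direction so that the adjugate coefficients and $\partial^2_{\hat y}F$ (resp.\ $\partial^2_{\hat x}F$) drop out, and combine $|DF|\le Ch_Q$ with $J\ge Ch_Q^2$ in the change of variables; your bookkeeping for \eqref{inq2} (coefficients of order $h_Q^3$ times the factor $h_Q^{-1}$ from the Jacobian) is more explicit than the paper's brief remark that the coefficients are ``of order $h_Q^2$''.
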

\begin{proof}
To simplify the notation, we use subscripts for partial derivatives;
for example, $u_x=\frac{\partial u}{\partial x}$.
By the definition of the Piola transformation, we have
\begin{align}
\label{good}
\hat u &= y_{\hat y}u-x_{\hat y}v,\\
\label{bad}
\hat v &= -\,y_{\hat x}u+x_{\hat x}v.
\end{align}
Hence
\[
\hat u_{\hat y}
=
x_{\hat y}y_{\hat y}u_x
+
(y_{\hat y})^2u_y
-
(x_{\hat y})^2v_x
+
x_{\hat y}y_{\hat y}v_y,
\]
since the term $y_{\hat y\hat y}u-x_{\hat y\hat y}v$ vanishes.
Noting that
$x_{\hat x},x_{\hat y},y_{\hat x},y_{\hat y},x_{\hat x\hat y},y_{\hat x\hat y}$
are all of order $h$, and that $J\ge Ch^2$, we obtain
\[
\|\hat u_{\hat y}\|_{0,\hat Q}\le Ch\,|{\bf u}|_{1,Q}.
\]
Similarly,
\[
\|\hat v_{\hat x}\|_{0,\hat Q}\le Ch\,|{\bf u}|_{1,Q}.
\]
This proves \eqref{inq1}.

For \eqref{inq2}, differentiating \eqref{good} and \eqref{bad} once more,
we see that $\hat u_{\hat y\hat y}$ and $\hat v_{\hat x\hat x}$ are linear
combinations of $u_{xx},u_{xy},u_{yy},v_{xx},v_{xy},v_{yy}$, where the
coefficients involve products of first derivatives of the mapping such as
$x_{\hat y}y_{\hat y}$ and are therefore of order $h_Q^2$. This yields
\eqref{inq2}.
\end{proof}

The following lemma is needed to prove the boundedness of $\Pi_h$.

\begin{lemma}\label{Lemma3.4} Let $\Pi_h$ be the interpolation
operator from $\bH^1(\Omega)$ to $MRT_k, k=0$ or $1$. Then
\[||{\bf u}-\Pi_h {\bf u}||_0\le Ch|{\bf u}|_1\qquad  \forall {\bf
u}\in {\bf H}^1(\Omega).\]
\end{lemma}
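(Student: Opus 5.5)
Our plan is to argue element by element and then sum over $Q\in{\mathcal Q}_h$. On a fixed $Q$ we pull back to $\hQ$ through the Piola transformation. We will use the scaling $\|\bu\|_{0,Q}\le C\|\hat\bu\|_{0,\hQ}$, which follows from (\ref{Piola}) together with (\ref{bound_jacobian})--(\ref{bound_det}): the factor $J^{-1}DF$ is of size $h^{-1}$, while $dx\,dy=J\,d\hx\, d\hy$ contributes a factor $h$ to the norm.

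The interpolant commutes with the Piola map, $\Pi_Q\bu={\mathcal P}_Q\hat\Pi\hat\bu$. The reason is that the edge degrees of freedom (\ref{dof1}) are Piola invariant, and the interior ones (\ref{dof2}) are defined on $\hQ$. Hence
\[
\|\bu-\Pi_Q\bu\|_{0,Q}\le C\|\hat\bu-\hat\Pi\hat\bu\|_{0,\hQ}.
\]
It therefore suffices to prove the reference estimate
\[
\|\hat\bu-\hat\Pi\hat\bu\|_{0,\hQ}\le C\bigl(\|\hat u_{\hat y}\|_{0,\hQ}+\|\hat v_{\hat x}\|_{0,\hQ}+h\,|\bu|_{1,Q}\bigr).
\]
Lemma~\ref{lemma3.1} then turns the right-hand side into $Ch_Q|\bu|_{1,Q}$. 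Note that we cannot simply use the full $|\hat\bu|_{1,\hQ}$: on a non-affine element the pieces $\hat u_{\hat x}$ and $\hat v_{\hat y}$ carry the divergence, and Lemma~\ref{lemma3.1} deliberately only controls the cross derivatives.

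\textbf{Step 1: uniform boundedness of $\hat\Pi$.} We show that $\hat\Pi$ is bounded from $\bH^1(\hQ)$ into $L^2(\hQ)$ with a constant depending only on $\sigma$. The degrees of freedom are bounded functionals on $\bH^1(\hQ)$ by the trace theorem. The basis of $MRT_1(\hQ,Q)$ dual to them depends continuously on $(\alpha,\beta,\gamma)/|Q|$ through (\ref{t1})--(\ref{G2}). By shape regularity these normalized parameters range over a compact set on which $\alpha,\alpha_1,\alpha_2,\alpha_3$ are bounded below, and the determinant computed in Lemma~\ref{Lemma3.DOF} stays away from zero. A compactness argument then gives the uniform bound.

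\textbf{Step 2: Bramble--Hilbert argument.} Here we split $\hat\bu$ into a divergence-free part and the rest.
\begin{itemize}
\item By Lemma~\ref{Lemma3.0} and Remark~\ref{Remark3.0}, $\hat\Pi$ reproduces curls of biquadratics, and in particular every constant vector field.
\item Subtracting the mean, the standard anisotropic Bramble--Hilbert lemma for $RT$-type interpolants on $\hQ$ (as in Arnold--Boffi--Falk) bounds $\|\hat u-\hat u_\Pi\|$ by $\|\hat u_{\hat y}\|+\|\hat v_{\hat x}\|$ plus a term involving $\div\hat\bu$. The point is that the first component interpolant depends on $\hat u$ only through its traces on $\hat x=0,1$ and its interior moments.
\item The divergence term is controlled using Lemma~4.2. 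On the element, $\div\Pi_Q\bu=P_Q\div\bu$, so $\|\div\hat\bu-\div\hat\Pi\hat\bu\|_{0,\hQ}\le C\|J\div\bu\|$ after pull-back, which is $\le Ch\|\div\bu\|_{0,Q}\le Ch|\bu|_{1,Q}$.
\end{itemize}

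\textbf{Step 3: conclusion.} Combining the steps and squaring and summing over $Q$ gives $\|\bu-\Pi_h\bu\|_0\le Ch|\bu|_1$.

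The main obstacle is Step 2. We must show that the correction bubbles, whose coefficients involve $b+B$, $2c+E$ and so on, do not spoil the anisotropic estimate. We would handle this by writing $\hat\Pi\hat\bu=\hat{\mathcal RT}\hat\bu+\hat\bb$. The bubble $\hat\bb$ vanishes in normal trace and has coefficients that are linear in the divergence-related combinations of the $RT_1$ coefficients. Its $L^2$ norm is therefore bounded by $C\|\div\hat{\mathcal RT}\hat\bu\|\le C\|\div\hat\bu\|_{0,\hQ}$, and this in turn is at most $Ch|\bu|_{1,Q}$ by the pull-back argument above.
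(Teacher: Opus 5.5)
Your proof is correct in outline, but it takes a different route from the paper.

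\textbf{How the two arguments differ.} The paper never compares with the classical interpolant. It splits $\hat\bu=\nabla q+\mbox{curl}\,\psi$ with $\Delta q=\div\hat\bu$ in $\hQ$ and $q=0$ on $\partial\hQ$. This gives $\|q\|_{2,\hQ}\le C\|\div\hat\bu\|_{0,\hQ}$, while $\|\psi_{\hat x\hat x}\|_{0,\hQ}+\|\psi_{\hat y\hat y}\|_{0,\hQ}$ is bounded by the two cross derivatives plus $\|\div\hat\bu\|_{0,\hQ}$. It then applies a Bramble--Hilbert argument to $\hat\Pi$ itself, using only that $\hat\Pi$ reproduces curls of bilinears (Lemma~\ref{Lemma3.0}).

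You instead write $\hat\Pi\hat\bu=\hat{\mathcal RT}\hat\bu+\hat\bb$, import the anisotropic reference estimate for $\hat{\mathcal RT}$, and control $\hat\bb$ by $\|\div\hat\bu\|_{0,\hQ}$. That is the mechanism the paper reserves for Lemma~\ref{Lemma5.6} and Theorem~\ref{thm5.7}.

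Your route isolates exactly what is new in $MRT_k$. Your Step~1 also supplies the uniformity in $Q$ that the paper's Bramble--Hilbert step needs but never states, since its $\hat\Pi$ depends on $\beta/\alpha$ and $\gamma/\alpha$. The paper's route is self-contained and works for any uniformly bounded interpolant that fixes divergence-free $RT_0$ fields.

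\textbf{First point to tighten: the cited reference estimate.} The estimate $\|\hat\bu-\hat{\mathcal RT}\hat\bu\|_{0,\hQ}\le C(\|\hat u_{\hat y}\|_{0,\hQ}+\|\hat v_{\hat x}\|_{0,\hQ}+\|\div\hat\bu\|_{0,\hQ})$ carries the whole difficulty. It is classical: it underlies the $O(h)$ bound for quadrilateral $RT_k$ and is proved by the same Girault--Raviart decomposition. The reason you give for it, however, does not prove it. The first component of the classical interpolation error depends only on $\hat u$. For a non-reproduced $\hat u=\hat u(\hat x)$ (e.g.\ $\hat x^3$ when $k=1$) one has $\hat u_{\hat y}=0$, yet the error is nonzero. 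So the bound can only come from coupling the components through $\div\hat\bu$, and that coupling is exactly what the decomposition provides.

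\textbf{Second point to tighten: the correction term $\hat\bb$.} For $k=1$, $\hat\bb$ is not merely the cubic correction attached to $\hat{\mathcal RT}\hat\bu$. The terms $\hat x^2(\hat x-1)$ and $\hat x^2(\hat x-1)\hat y$ have nonzero moments against $Q_{0,1}\times Q_{1,0}$, so the $RT_1$ part shifts as well. The bound $\|\hat\bb\|_{0,\hQ}\le C\|\div\hat{\mathcal RT}\hat\bu\|_{0,\hQ}$ should be justified as follows.
\begin{itemize}
\item Write $\hat\bb=(\hat\Pi-I)\hat{\bf w}$ with $\hat{\bf w}=\hat{\mathcal RT}\hat\bu\in RT_1(\hQ)$.
\item This vanishes whenever $\div\hat{\bf w}=0$, by Lemma~\ref{Lemma3.0} together with unisolvence.
\item Hence the map factors through $\div:RT_1(\hQ)\to Q_{1,1}$.
\item Your compactness argument makes the resulting constant uniform in $Q$.
\end{itemize}
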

\begin{proof}
To prove this theorem, we borrow an idea from  \cite[p. 255]{girault1986finite},
which uses a decomposition to reduce vector consideration to
scalar one so that a Bramble-Hilbert type lemma can be used.

Let ${\bf u}\in {\bf H}^1(Q)$ and let $\hat{\bf u}$ be its Piola pullback on $\hat Q$. Then we can use the
decomposition $\hat{ \bf u}=\nabla q+ \mbox{curl} \psi$, where $q$
can be determined by the elliptic problem
\begin{eqnarray*}
 \Delta q&=&\div {\bf\hat  u}\qquad
\mbox{ on } \hat Q\\ q&=&0\qquad \qquad \mbox{ on } \partial \hat
Q.
\end{eqnarray*}
Because $\hat Q$ is convex the estimate $||q||_{2,\hat Q}\le
C||\div \hat {\bf  u}||_{0,\hat Q}$ holds. Using the above
decomposition $\mbox{curl} \psi=\hat {\bf  u}-\nabla q$ and
differentiating the first component with respect to $\hat y$ and
second one with respect to $\hat x$, we derive

\[
||\frac{\partial^2\psi} {\partial \hat y^2}||_{0,\hat
Q}+||\frac{\partial^2\psi} {\partial \hat x^2}||_{0,\hat Q}\le C
\left( ||\frac{\partial\hat u_1}{\partial \hat y}||_{0,\hat
Q}+||\frac{\partial\hat u_2}{\partial \hat x}||_{0,\hat Q}+||\div
\hat {\bf u}||_{0,\hat Q}\right).\] Since
\[
||{\bf\hat  u}-\hat {\Pi}  \hat {\bf  u}||_{0,\hat Q}\le ||\nabla
q-\hat{\Pi} \nabla q||_{0,\hat Q}+||\mbox{curl}\psi-\hat {\Pi}
\mbox{curl} \psi||_{0,\hat Q}\] we estimate the right hand side
terms separately. Using a Bramble-Hilbert type Lemma \cite[p.
106]{girault1986finite} for quadrilaterals with the corresponding seminorms
$[f]^2_i:=||\frac{\partial^i f}{\partial \hat x^i}||_{0,\hat
Q}^2+||\frac{\partial^i f}{\partial \hat y^i}||_{0,\hat Q}^2$, we
see that
\[||\nabla
q-\hat{\Pi} \nabla q||_{0,\hat Q}\le C [q]_1,\] since the quantity
$\nabla q-\hat{\Pi} \nabla q$ is zero for constant $q$. On the
other hand,
\[||\mbox{curl}\psi-\hat {\Pi}
\mbox{curl} \psi||_{0,\hat Q}\le C[\psi]_2,\] since the quantity
$\mbox{curl}\psi-\hat {\Pi} \mbox{curl} \psi$ is zero for bilinear
$\psi$. This is true for both the Raviart--Thomas and modified
Raviart--Thomas interpolation operators since the curl of a
bilinear function is of the form $(a+bx, A+By)$ with $b+B=0$ (cf.
Lemma \ref{Lemma3.0}). Hence
\[||\hat {\bf  u}-\hat {\Pi}  \hat{\bf  u}||_{0,\hat Q}\le
 C \left( ||\frac{\partial\hat u_1}{\partial \hat y}||_{0,\hat
Q}+||\frac{\partial\hat u_2}{\partial \hat x}||_{0,\hat Q}+||\div
\hat{\bf  u}||_{0,\hat Q}\right).\]

Now, by the norm-equivalence induced by the Piola transformation and the
shape regularity of the partition, we have
\begin{eqnarray*}
 \|{\bf u}-\Pi {\bf u}\|_{0,Q}
 &\le&
 C\|\hat{\bf u}-\hat\Pi \hat{\bf u}\|_{0,\hat Q}\\
 &\le&
 C\left(
 \Big\|\frac{\partial \hat u_1}{\partial \hat y}\Big\|_{0,\hat Q}
 +
 \Big\|\frac{\partial \hat u_2}{\partial \hat x}\Big\|_{0,\hat Q}
 +
 \|\div \hat{\bf u}\|_{0,\hat Q}
 \right).
\end{eqnarray*}
Using \eqref{inq1}, we obtain
\[
\Big\|\frac{\partial \hat u_1}{\partial \hat y}\Big\|_{0,\hat Q}
+
\Big\|\frac{\partial \hat u_2}{\partial \hat x}\Big\|_{0,\hat Q}
\le Ch\,|{\bf u}|_{1,Q}.
\]
Moreover, since
\[
\div {\bf u}=\frac1J \div \hat{\bf u},
\]
we have
\begin{eqnarray*}
\|\div \hat{\bf u}\|_{0,\hat Q}^2
&=
\int_{\hat Q} |\div \hat{\bf u}|^2\,d\hat x d\hat y
=
\int_{\hat Q} J^2\,|(\div {\bf u})\circ F_Q|^2\,d\hat x d\hat y\\
&=
\int_Q J\,|\div {\bf u}|^2\,dxdy
\le Ch^2\|\div {\bf u}\|_{0,Q}^2.
\end{eqnarray*}
Hence
\[
\|\div \hat{\bf u}\|_{0,\hat Q}\le Ch\,\|\div {\bf u}\|_{0,Q}.
\]
Therefore
\[
\|{\bf u}-\Pi {\bf u}\|_{0,Q}
\le
Ch\bigl(|{\bf u}|_{1,Q}+\|\div {\bf u}\|_{0,Q}\bigr)
\le
Ch\,|{\bf u}|_{1,Q},
\]
which proves the result after summing over all $Q$.
 \end{proof}
 \begin{lemma}\label{Lemma3.50}
 Let $\Pi_h$ be the interpolation
operator from $\bH^1(\Omega)$ to $MRT_k, k=0$ or $1$. Then
 \begin{align}\label{div11}
 (\div(\bu-\Pi_h\bu),w)&=0\qquad \forall \bu\in \bV, \forall w\in W_h,\\
 \label{div22}
 ||\div(\bu-\Pi_h\bu)||_0&\le Ch^{k+1}|\div\bu|_{k+1},
 \end{align}
 where $k=0$ for the space $MRT_0$ and $k=1$ for the space
 $MRT_1$.
 \end{lemma}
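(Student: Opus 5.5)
Both claims follow from the compatibility $\div MRT_k\subset W_h$ together with the degrees of freedom \eqref{dof1}--\eqref{dof2}. The plan is to prove the orthogonality \eqref{div11} first, elementwise on the reference square. Then \eqref{div11} and the inclusion show that $\div\Pi_h\bu$ is the $L^2$ projection of $\div\bu$ onto $W_h$, and \eqref{div22} follows from Lemma \ref{e2}.

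\emph{Step 1: reduction to $\hQ$.} Fix $Q$ and $w\in W_h$, and set $\hw=w\circ F_Q\in Q_{k,k}$. By \eqref{crux} and the change of variables $dxdy=J\,d\hx d\hy$, the Jacobian cancels:
\[
\int_Q \div(\bu-\Pi_Q\bu)\,w\,dxdy=\int_{\hQ}\div(\hat\bu-\hat\Pi\hat\bu)\,\hw\,d\hx d\hy .
\]
Integrating by parts on $\hQ$ gives
\[
\int_{\partial\hQ}(\hat\bu-\hat\Pi\hat\bu)\cdot\hat\bn\,\hw\,d\sigma-\int_{\hQ}(\hat\bu-\hat\Pi\hat\bu)\cdot\nabla\hw\,d\hx d\hy .
\]
On each edge $\hat e$, the restriction of $\hw\in Q_{k,k}$ lies in $P_k(\hat e)$, so the boundary term vanishes by \eqref{dof1}. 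For $k=1$, $\nabla\hw\in Q_{0,1}\times Q_{1,0}=Q_{k-1,k}\times Q_{k,k-1}$, so the volume term vanishes by \eqref{dof2}. For $k=0$, $\nabla\hw=0$. Summing over $Q$ gives \eqref{div11} for all $\bu\in\bH^1(\Omega)$, which is where $\Pi_h$ is defined. This is the point where the bubble corrections matter: they do not perturb the edge moments, since $\hat x^2(\hat x-1)$ vanishes at $\hat x=0,1$. Unisolvence from Lemma \ref{Lemma3.DOF} guarantees that $\Pi_h$ is well defined.

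\emph{Step 2: identification with $P_h$.} By construction $\div\Pi_h\bu\in W_h$. Here $W_h$ is the space for which $\div MRT_k\subset W_h$ holds: for $k=1$ this is the construction in \eqref{primary}, and for $k=0$ it is \eqref{def}. Combined with \eqref{div11}, this shows $\div\Pi_h\bu$ is the $L^2(\Omega)$-orthogonal projection of $\div\bu$ onto $W_h$. One check is needed: $P_h$ is defined with the unweighted inner product on $\hQ$, whereas the $L^2(Q)$ projection carries the weight $J$. I would avoid this issue by not invoking $P_h$ directly. Instead, let $\Pi^0$ denote the true $L^2(Q)$-projection onto $W_k(Q)$. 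Its best-approximation property gives
\[
\norm{\div\bu-\div\Pi_h\bu}_{0,Q}\le\norm{\div\bu-w}_{0,Q}\qquad\forall w\in W_k(Q).
\]

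\emph{Step 3: the estimate.} Choose $w=P_Q(\div\bu)$ and sum over $Q$. Then \eqref{approx_P1} with $\phi=\div\bu$ gives
\[
\norm{\div(\bu-\Pi_h\bu)}_0\le\norm{\div\bu-P_h\div\bu}_0\le Ch^{k+1}|\div\bu|_{k+1}.
\]
The regularity $\div\bu\in H^{k+1}$ is assumed implicitly in \eqref{div22}. The main obstacle I expect is Step 2: it requires the exact inclusion $\div MRT_k(Q)\subset W_k(Q)$ as mapped spaces, and the weight $J$ must not spoil the projection argument. The best-approximation route handles the latter, since it needs only \eqref{approx_P1} for some element of $W_h$ and not the orthogonality properties of $P_h$.
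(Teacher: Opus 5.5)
Your proposal is correct and follows the paper's proof. You prove \eqref{div11} by integrating by parts elementwise on $\hQ$ with \eqref{dof1}--\eqref{dof2}, and you supply the factor $\hw$ in the boundary term that the paper's display omits; you then reduce \eqref{div22} to \eqref{approx_P1} through $\div MRT_k\subset W_h$ and \eqref{div11}, exactly as the paper does. Your best-approximation argument is equivalent to the paper's chain $\|\div(\bu-\Pi_h\bu)\|_0^2=(\div(\bu-\Pi_h\bu),\div\bu-P_h\div\bu)$, which likewise needs only $P_h\div\bu\in W_h$, so the weight-$J$ issue you correctly flag affects neither argument.
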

 \begin{proof}
 Observe that
 \begin{eqnarray*}
 (\div(\bu-\Pi_h\bu),w)_Q&=&(\div(\hat\bu-\hat\Pi\hat\bu),\hw)_{\hQ}\\
 &=&-\int_{\hQ}(\hat\bu-\hat\Pi\hat\bu)\cdot \nabla \hw\,d\hx
 d\hy+\sum_{i=1,4}\int_{\hat l_i}(\hat\bu-\hat\Pi\hat\bu)\cdot{\bf
 n}ds\\
 &=&0,
 \end{eqnarray*}
 where we have used (\ref{dof1})--(\ref{dof2}).
 Summing over $\hQ$ proves (\ref{div11}).

 Using the fact $\div \bV_h\subset W_h$, the approximation
 property of $P_h$ (\ref{approx_P1}), and (\ref{div11}), we have
 \begin{eqnarray*}
 ||\div(\bu-\Pi_h\bu)||_0^2&=&
\left(\div(\bu-\Pi_h\bu),\div(\bu-\Pi_h\bu)\right)\\
 &=&(\div(\bu-\Pi_h\bu),\div\bu)\\
 &=&\left(\div(\bu-\Pi_h\bu),\div\bu-P_h\div\bu\right)\\
 &\le&||\div(\bu-\Pi_h\bu)||_0||P_h\div\bu-\div \bu||_0\\
 &\le&Ch^{k+1}||\div(\bu-\Pi_h\bu)||_0|\div\bu|_{k+1}.
 \end{eqnarray*}
 This completes the proof of (\ref{div22}).
 \end{proof}

\begin{lemma}\label{Lemma3.5} The operator $\Pi_h$ is bounded, i.e.,
\[||\Pi_h {\bf v}||_{{\bf H}(\div)}\le C ||{\bf v}||_1
\qquad \forall {\bf v}\in {\bV}.\]
\end{lemma}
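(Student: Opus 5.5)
The bound follows from the two lemmas just proved together with the triangle inequality. Recall that
\[
\|\Pi_h\bv\|_{\bH(\div)}^2=\|\Pi_h\bv\|_0^2+\|\div\Pi_h\bv\|_0^2 .
\]
I will estimate the $L^2$ part and the divergence part separately, in each case comparing $\Pi_h\bv$ with $\bv$.

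For the $L^2$ part I use Lemma \ref{Lemma3.4}:
\[
\|\Pi_h\bv\|_0\le \|\bv\|_0+\|\bv-\Pi_h\bv\|_0\le \|\bv\|_0+Ch|\bv|_1\le C\|\bv\|_1 ,
\]
since $h$ is bounded (say $h\le \mathrm{diam}\,\Omega$). Here $\bv$ is taken in $\bH^1(\Omega)$, which is the domain of $\Pi_h$; I read the statement's ``$\forall\bv\in\bV$'' in that sense, because the right-hand side only makes sense for $\bH^1$ functions.

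For the divergence part, the cleanest route does not go through the full estimate (\ref{div22}), since that requires $|\div\bv|_{k+1}$, which is more regularity than $\|\bv\|_1$ provides. Instead I use the commuting property established above, $\div(\Pi_h\bv)=P_h(\div\bv)$. Combined with the $L^2$ stability (\ref{bound_P}) of $P_h$, this gives
\[
\|\div\Pi_h\bv\|_0=\|P_h\div\bv\|_0\le C\|\div\bv\|_0\le C|\bv|_1 .
\]
Should one prefer to avoid the commuting lemma, the first step of the proof of Lemma \ref{Lemma3.50} works just as well. Because $\div\bV_h\subset W_h$, orthogonality (\ref{div11}) yields $\|\div(\bv-\Pi_h\bv)\|_0^2=(\div(\bv-\Pi_h\bv),\div\bv)$. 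Cauchy--Schwarz then gives $\|\div(\bv-\Pi_h\bv)\|_0\le\|\div\bv\|_0$, and hence $\|\div\Pi_h\bv\|_0\le 2\|\div\bv\|_0$. Adding the two parts proves the lemma.

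The only real subtlety is making sure that $\div\Pi_h\bv$ actually lies in $W_h$ and that (\ref{div11}) holds with the interior moments as defined. For $MRT_1$ this relies on the construction giving $\div MRT_1\subset W_h$. It also relies on the test space $Q_{0,1}\times Q_{1,0}$ in (\ref{dof2}) containing $\nabla\hw$ for every $\hw\in Q_{1,1}$, which it does, since $\nabla\hw=(\p_{\hx}\hw,\p_{\hy}\hw)\in Q_{0,1}\times Q_{1,0}$. Everything else is already provided by Lemma \ref{Lemma3.4} and Lemma \ref{e2}.
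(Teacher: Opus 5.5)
Your proof is correct and essentially follows the paper's. The $L^2$ bound is the same triangle inequality with Lemma~\ref{Lemma3.4}. Your fallback divergence argument, via (\ref{div11}) and $\div\bV_h\subset W_h$, is also the paper's, except that the paper tests (\ref{div11}) directly with $q_h=\div\Pi_h\bv$ and so gets $\|\div\Pi_h\bv\|_0\le\|\div\bv\|_0$ without your factor $2$.

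One caution about your primary route. Relation (\ref{div11}) identifies $\div\Pi_h\bv$ as the genuine $L^2(\Omega)$-orthogonal projection of $\div\bv$ onto $W_h$, which is stable with constant $1$, so nothing needs to be cited. The operator $P_h$ in (\ref{bound_P}) is instead the reference-mapped projection of Section~2. It is orthogonal with respect to the weight $1/J_Q$ and differs from the genuine projection whenever $J_Q$ is not constant, so invoking (\ref{bound_P}) there conflates two different operators.
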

\rev{
\begin{proof}
Using Lemma \ref{Lemma3.4}
\begin{eqnarray*}
\|\Pi_h {\bf v}||_0&\le&||\Pi_h {\bf v}-{\bf v}||_0+||{\bf
v}||_0\\ &\le& C||{\bf v}||_1.
\end{eqnarray*}
Furthermore,  by \eqref{div11},
\[
(\div({\bf v}-\Pi_h {\bf v}),q_h)=0
\qquad\forall q_h\in W_h.
\]
Taking $q_h=\div\Pi_h {\bf v}\in W_h$ gives
\[
\|\div \Pi_h {\bf v}\|_0^2
=
(\div {\bf v},\div\Pi_h {\bf v})
\le
\|\div {\bf v}\|_0\,\|\div\Pi_h {\bf v}\|_0.
\]
Hence
\[
\|\div\Pi_h {\bf v}\|_0
\le
\|\div {\bf v}\|_0.
\]
Combining the above estimates yields
\[
\|\Pi_h {\bf v}\|_{H(\div)}
\le
C\|{\bf v}\|_1.
\]
\end{proof}
}%%%end of rev

\begin{lemma}\label{Lemma3.6} There exists a positive constant
$\beta_0$ independent of $h$ such that for all $w_h\in W_h$
\[ ||w_h||_0\le \beta_0 \sup_{{\bf v}_h\in {\bf
V}_h}\frac{(w_h,\div {\bf v}_h)}{{||\bf v}_h||_{{\bf H}(\div)}}.\]
\end{lemma}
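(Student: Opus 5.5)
We will follow the standard Fortin argument, using the interpolation operator $\Pi_h$ and the bounds established above (Lemma \ref{Lemma3.50} and Lemma \ref{Lemma3.5}).

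Given $w_h\in W_h\subset L^2(\Omega)$, we first construct a continuous vector field $\bv\in\bH^1(\Omega)$ with
\[
\div\bv=w_h \ \mbox{in } \Omega,\qquad \|\bv\|_1\le C\|w_h\|_0 .
\]
The route we would take is to extend $w_h$ by zero to a large ball (or convex box) $B\supset\Omega$ and solve $\Delta\phi=\tilde w_h$ in $B$ with $\phi=0$ on $\partial B$. We then set $\bv=\nabla\phi|_\Omega$. Elliptic $H^2$ regularity on the convex set $B$ gives $\|\bv\|_{1,\Omega}\le\|\phi\|_{2,B}\le C\|w_h\|_0$. Alternatively, one can quote the standard right inverse of the divergence from $H^1_0$ on a Lipschitz domain (Bogovski\u{\i}). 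The constant depends only on $\Omega$, not on $h$.

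Next we set $\bv_h=\Pi_h\bv\in\bV_h=MRT_k$. By \eqref{div11}, $(\div(\bv-\Pi_h\bv),w)=0$ for all $w\in W_h$. Since $\div\bV_h\subset W_h$ and $\div\bv=w_h\in W_h$, the difference $\div(\bv-\Pi_h\bv)$ lies in $W_h$. Taking $w$ equal to this difference gives $\div\Pi_h\bv=w_h$ exactly, which is the key point where the construction of $MRT_k$ enters. Hence
\[
(w_h,\div\bv_h)=\|w_h\|_0^2 .
\]
Lemma \ref{Lemma3.5} gives $\|\bv_h\|_{\bH(\div)}\le C\|\bv\|_1\le C\|w_h\|_0$. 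Therefore
\[
\sup_{\bv_h\in\bV_h}\frac{(w_h,\div\bv_h)}{\|\bv_h\|_{\bH(\div)}}\ge \frac{\|w_h\|_0^2}{C\|w_h\|_0},
\]
which yields the claim with $\beta_0=C$. The case $w_h=0$ is trivial.

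The main obstacle is to ensure that the bound in Lemma \ref{Lemma3.5} is uniform in $h$, with a constant depending only on the regularity constant $\sigma$. This rests on Lemma \ref{Lemma3.4}, whose constant must not degenerate as the geometric corrections $G_i,H_i$ vary with $Q$. We would check this as follows. The reference interpolant $\hat\Pi$ depends continuously on $(\alpha,\beta,\gamma)/h_Q^2$. By \eqref{bound_det} this scaled triple ranges over a compact set on which the determinant of $M$ from Lemma \ref{Lemma3.DOF} stays bounded away from zero. A compactness argument then gives a uniform bound on $\|\hat\Pi\|$.

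If one prefers not to rely on the $H^1$ bound of $\Pi_h$, the stability of $\div$ alone suffices here. Indeed, $\div\Pi_h\bv=w_h$ exactly, and $\|\Pi_h\bv\|_0\le\|\bv\|_0+Ch|\bv|_1$ by Lemma \ref{Lemma3.4}, which again gives the needed bound.
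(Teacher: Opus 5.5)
Your proposal is correct and follows essentially the same Fortin argument as the paper. Both take an $\bH^1$ right inverse $\bv$ of the divergence with $\|\bv\|_1\le C\|w_h\|_0$, use the commuting relation \eqref{div11} to get $(w_h,\div\Pi_h\bv)=\|w_h\|_0^2$, and bound $\|\Pi_h\bv\|_{\bH(\div)}$ by Lemma~\ref{Lemma3.5}. Your additions are refinements the paper leaves implicit: the explicit construction of the right inverse, the exact identity $\div\Pi_h\bv=w_h$ obtained from $\div\bV_h\subset W_h$, and the compactness remark on the $h$-uniformity of $\hat\Pi$.
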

\rev{
\begin{proof}
Given $w_h\in W_h$, by the continuous inf-sup condition for
$H(\div;\Omega)\times L^2(\Omega)$, there exists
${\bf w}\in {\bf H}^1(\Omega)$ such that
\[
\div {\bf w}=w_h,
\qquad
\|{\bf w}\|_1\le C\|w_h\|_0.
\]
Using the commuting property \eqref{div11},
\[
(\div \Pi_h{\bf w},w_h)
=
(\div {\bf w},w_h)
=
\|w_h\|_0^2.
\]
Therefore
\[
\sup_{{\bf v}_h\in V_h}
\frac{(w_h,\div {\bf v}_h)}
{\|{\bf v}_h\|_{H(\div)}}
\ge
\frac{\|w_h\|_0^2}
{\|\Pi_h{\bf w}\|_{H(\div)}}.
\]
Applying Lemma \ref{Lemma3.5} gives
\[
\|\Pi_h{\bf w}\|_{H(\div)}
\le
C\|{\bf w}\|_1
\le
C\|w_h\|_0,
\]
which proves the discrete inf-sup condition.
\end{proof}
} %%%%% end of rev
Hence $MRT_k\times W_h$ is a stable mixed finite element pair.

We next record a lemma that controls the difference between the classical
$RT_1$ interpolant and the modified interpolant.

%%%%%%%%%%%%%%%%%%%%%%%%%%%%%%%%%%%%%%%%%%%%%%%%%%%%
%%%%%%%%%%%%%%%%%%%%%%%%%%%%%%%%%%%%%%%%%%%%%%%%%%%%%
\begin{lemma}\label{Lemma5.6}
Let $\hat{\bf v}\in RT_1(\hat Q)$ and let $\hat\Pi \hat{\bf v}\in MRT_1(\hat Q)$.
Assume that
\[
\div \hat v_M = J(\hat x,\hat y)\,\hat q(\hat x,\hat y),
\qquad \hat q \in Q_{1,1}(\hat Q),
\]
where $\hat v_M=\hat\Pi \hat v$.
There exists a positive constant $C$ independent of $h$ such that
\[
\|\hat{\bf v}-\hat\Pi \hat{\bf v}\|_{0,\hat Q}
\le
C(|\beta|+|\gamma|)\,|\hat q|_{1,\hat Q}.
\]
\end{lemma}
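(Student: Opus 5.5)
The idea is to read the statement as a comparison between an $RT_1$ field $\hat{\bf v}$ and its $MRT_1$ counterpart. Write $\hat{\bf v}=(\hat u,\hat v)$ in the form \eqref{original}--\eqref{original2}. Write $\hat\Pi\hat{\bf v}$ as the modified field \eqref{new}--\eqref{new2} built from coefficients $\tilde a,\dots,\tilde F$ together with the bubble coefficients $H_i,G_i$.

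First I show that the polynomial parts coincide, i.e. $\tilde a=a,\dots,\tilde F=F$. The added terms $\hat x^2(\hat x-1)\hat y$ and $\hat x^2(\hat x-1)$ vanish on the edges $\hat x=0,1$. Likewise $\hat y^2(\hat y-1)\hat x$ and $\hat y^2(\hat y-1)$ vanish on $\hat y=0,1$. So the edge moments \eqref{dof1} of $\hat{\bf v}-\hat\Pi\hat{\bf v}$ involve only the differences of the $RT_1$ coefficients, and the edge computation in the proof of Lemma~\ref{Lemma3.DOF} gives
\begin{itemize}
\item $a=\tilde a$ and $d=\tilde d$;
\item $A=\tilde A$ and $D=\tilde D$;
\item the sums $b+c$, $e+f$, $B+C$, $E+F$ agree.
\end{itemize}
The interior moments \eqref{dof2} then give four linear equations in the remaining differences $\delta b,\delta e,\delta B,\delta E$ with right-hand sides $\int \hat x^2(\hat x-1)(G_1+H_1\hat y)\cdot(1\text{ or }\hat x)$, and similarly for the second component.

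This step cannot give $\delta b=\delta e=\delta B=\delta E=0$, since the right-hand sides involve $G_i,H_i\neq0$. So I will instead write the difference as
\[
\hat{\bf v}-\hat\Pi\hat{\bf v}=\bigl(\text{$RT_1$ part }\delta\bigr)-\bigl(\text{bubbles with }G_i,H_i\bigr).
\]
The $RT_1$ part is determined by the $4\times4$ system whose matrix is the $\beta,\gamma\to0$ limit of $M$. That matrix is invertible with a fixed inverse, so $\delta$ is controlled by $C(|G_1|+|G_2|+|H_1|+|H_2|)$. It follows that
\[
\|\hat{\bf v}-\hat\Pi\hat{\bf v}\|_{0,\hat Q}\le C\sum_i(|G_i|+|H_i|).
\]

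Next I use \eqref{H1}--\eqref{G2}: $G_1=\beta t_2/3$, $G_2=\gamma t_3/3$, $H_1=\beta t_4/3$, $H_2=\gamma t_4/3$, where $\hat q=t_1+t_2\hat x+t_3\hat y+t_4\hat x\hat y$ by \eqref{primary}. Hence
\[
\sum_i(|G_i|+|H_i|)\le C(|\beta|+|\gamma|)(|t_2|+|t_3|+|t_4|).
\]
The quantities $t_2,t_3,t_4$ are exactly the coefficients of the nonconstant part of $\hat q$. On the finite-dimensional space $Q_{1,1}$ modulo constants, $|\hat q|_{1,\hat Q}$ is a norm, so by norm equivalence $|t_2|+|t_3|+|t_4|\le C|\hat q|_{1,\hat Q}$. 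This gives the claim.

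The main obstacle is making the first step rigorous with a constant independent of $Q$. The interpolation conditions in the $MRT_1$ space couple $\delta$ and $(G_i,H_i)$ through the geometry, because the $t_i$ themselves depend on the coefficients of $\hat\Pi\hat{\bf v}$. I would handle this by treating $t_2,t_3,t_4$ as the coefficients of the given $\hat q$ in the hypothesis, rather than solving for them. The bubble part is then an explicit function of $\hat q$, and the system for $\delta$ has a fixed, geometry-free matrix: the $RT_1$ interior-moment matrix restricted by the edge relations. Its invertibility follows from unisolvence of $RT_1$. Beyond that, the only geometric input needed is boundedness of $\beta/\alpha$ and $\gamma/\alpha$, which follows from \eqref{bound_det} and shape regularity.
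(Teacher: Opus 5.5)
Your proof follows the same route as the paper's. You bound $\hat{\bf v}-\hat\Pi\hat{\bf v}$ by $C\sum_i(|G_i|+|H_i|)$, insert \eqref{H1}--\eqref{G2} with $t_2,t_3,t_4$ read off from the given $\hat q$, and finish by norm equivalence on $Q_{1,1}$ modulo constants. The first step differs slightly. The paper writes down an explicit formula for the difference. You argue that the $RT_1$ part $\delta$ of the difference has zero edge moments and the same interior moments as the bubble part $\hat{\bf b}$. Hence $\delta=\hat{\mathcal RT}\hat{\bf b}$ and $\hat{\bf v}-\hat\Pi\hat{\bf v}=-(I-\hat{\mathcal RT})\hat{\bf b}$. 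That justification is sound, and it shows the constant is absolute, so the bound on $\beta/\alpha,\gamma/\alpha$ you invoke is not needed.

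One concrete step must be corrected. You test the first component against $1$ and $\hat x$, and you identify the $4\times4$ matrix with the $\beta,\gamma\to0$ limit of $M$ from the proof of Lemma~\ref{Lemma3.DOF}. That limit is block diagonal with blocks $\begin{pmatrix}1/6&1/12\\1/12&1/24\end{pmatrix}$, which are singular. After the edge relations, the first component of $\delta$ is $(\hat x-\hat x^2)(\delta b+\delta e\,\hat y)$. Its moments against $1$ and $\hat x$ both see only $\delta b+\tfrac12\delta e$. The proof of Lemma~\ref{Lemma3.DOF} uses the wrong interior test functions. \eqref{dof2} prescribes $Q_{0,1}\times Q_{1,0}$, i.e.\ $(1,0),(\hat y,0),(0,1),(0,\hat x)$. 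With these the blocks become $\begin{pmatrix}1/6&1/12\\1/12&1/18\end{pmatrix}$, with determinant $1/432$. This is exactly your ``unisolvence of $RT_1$'' argument, so drop the appeal to $M$ and keep that one.

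Solving gives $\delta b=-\tfrac12G_1$ and $\delta e=-\tfrac12H_1$, and symmetrically $\delta B=-\tfrac12G_2$, $\delta E=-\tfrac12H_2$. Hence
\[
\hat{\bf v}-\hat\Pi\hat{\bf v}
=-\begin{pmatrix}(G_1+H_1\hat y)\,\phi(\hat x)\\ (G_2+H_2\hat x)\,\phi(\hat y)\end{pmatrix},
\qquad \phi(s)=s\bigl(s-\tfrac12\bigr)(s-1).
\]
This is the paper's displayed expression with the sign of its $RT_1$ terms reversed. As printed, the paper's first component has integral $\tfrac16(G_1+\tfrac12H_1)$ over $\hat Q$, so it violates the $(1,0)$ moment. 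The lemma's bound is unaffected, but your abstract route is the more reliable way to reach it.
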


\begin{proof}
Let $\hat{\bf v}=(\hat v_1,\hat v_2)^T \in RT_1(\hat Q)$ and
$\hat\Pi \hat{\bf v}\in MRT_1(\hat Q)$.
From the unisolvence relations (cf. Lemma \ref{Lemma3.DOF}), the difference
$\hat{\bf v}-\hat\Pi \hat{\bf v}$ depends only on the coefficients
$H_1,H_2,G_1,G_2$ associated with the non-affine correction terms.

More precisely, one obtains
\[
\hat{\bf v}-\hat\Pi \hat{\bf v}
=
\begin{pmatrix}
\frac{G_1}{2}\hat x - \frac{G_1}{2}\hat x^2
+ \frac{H_1}{2}\hat x\hat y - \frac{H_1}{2}\hat x^2\hat y
- H_1 \hat x^2(\hat x-1)\hat y - G_1 \hat x^2(\hat x-1) \\
\frac{G_2}{2}\hat y - \frac{G_2}{2}\hat y^2
+ \frac{H_2}{2}\hat x\hat y - \frac{H_2}{2}\hat x\hat y^2
- H_2 \hat y^2(\hat y-1)\hat x - G_2 \hat y^2(\hat y-1)
\end{pmatrix}.
\]

By the construction of the modified basis functions,
\[
H_1=\tfrac13 \beta t_4,\quad G_1=\tfrac13 \beta t_2,
\qquad
H_2=\tfrac13 \gamma t_4,\quad G_2=\tfrac13 \gamma t_3,
\]
where $\hat q=t_1+t_2\hat x+t_3\hat y+t_4\hat x\hat y \in Q_{1,1}(\hat Q)$.

Substituting these relations into the above expression and using the
boundedness of the polynomial basis on $\hat Q$, we obtain
\[
\|\hat{\bf v}-\hat\Pi \hat{\bf v}\|_{0,\hat Q}
\le
C\bigl(
|\beta|(|t_2|+|t_4|)
+
|\gamma|(|t_3|+|t_4|)
\bigr).
\]

Since all norms are equivalent on the finite-dimensional space
$Q_{1,1}(\hat Q)$, we have
\[
|t_2|+|t_3|+|t_4|
\le
C\,|\hat q|_{1,\hat Q}.
\]

Therefore,
\[
\|\hat{\bf v}-\hat\Pi \hat{\bf v}\|_{0,\hat Q}
\le
C(|\beta|+|\gamma|)\,|\hat q|_{1,\hat Q},
\]
which completes the proof.
\end{proof}

%%%%%%%%%%%%%%%%%%%%
Now we can finally prove the following interpolation error estimate.

%\begin{theorem}\label{thm:MRT1-L2}
\begin{theorem}\label{thm5.7}
Let $\Pi_h$ be the interpolation operator from ${\bf H}^2(\Omega)$
to $MRT_1$. Then
\[
\|{\bf u}-\Pi_h{\bf u}\|_{0}
\le
Ch^2\bigl(|{\bf u}|_{2}+|\div{\bf u}|_{1}\bigr),
\qquad
{\bf u}\in {\bf H}^2(\Omega),\ \div{\bf u}\in H^1(\Omega).
\]
\end{theorem}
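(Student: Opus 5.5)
The plan is to split the error on each element $Q$ through the classical interpolant:
\[
\bu-\Pi_Q\bu=(\bu-{\mathcal RT}_Q\bu)+({\mathcal RT}_Q\bu-\Pi_Q\bu).
\]
Then work on $\hQ$ through the Piola map. The transformation gives $\|\bv\|_{0,Q}\le C\|\hbv\|_{0,\hQ}$ uniformly, by (\ref{bound_jacobian})--(\ref{bound_det}).

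For the first term, Theorem~\ref{e3} cannot be used, since it fails on general quadrilaterals. Instead I would repeat the argument of Lemma~\ref{Lemma3.4} one order higher. Decompose $\hat\bu=\nabla q+\mbox{curl}\,\psi$ with $\Delta q=\div\hat\bu$, $q=0$ on $\partial\hQ$. For the curl part, Remark~\ref{Remark3.0} says $\mbox{curl}\,\psi$ is reproduced by both $\hat{\mathcal RT}$ and $\hat\Pi$ when $\psi\in Q_{2,2}$. The anisotropic Bramble--Hilbert lemma then bounds that part by $[\psi]_3$, and hence by $\|\hat u_{\hy\hy}\|+\|\hat v_{\hx\hx}\|+|\div\hat\bu|_1$. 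For the gradient part, elliptic regularity on the square gives $\|q\|_3\le C\|\div\hat\bu\|_1$. The $\bH^1$ part of $\|\div\hat\bu\|_1$ is reproduced for divergences in $Q_{1,1}$ only by $\hat{\mathcal RT}$, not by $\hat\Pi$. This suggests applying the decomposition argument directly to $\hat\Pi$ and handling the gradient piece via the second term below.

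Concretely, I will estimate the total $\hat\bu-\hat\Pi\hat\bu$ by Bramble--Hilbert only for the part of $\hat\bu$ with divergence in $Q_{1,1}$ up to $O(|\div\hat\bu|_1)$. By (\ref{inq2}) the curl part is $\le Ch^2|\bu|_{2,Q}$. For the divergence part I use $\div\hat\bu=J\,(\div\bu)\circ F$. Its $H^1(\hQ)$ seminorm is bounded by $C h^2\|\div\bu\|_{0,Q}\cdot h^{-1}\cdot h$-type scalings plus $Ch\cdot h|\div\bu|_{1,Q}$, using $|\nabla J|\le C(|\beta|+|\gamma|)\le Ch^2$ and $\|DF\|\le Ch$.

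The second term is where Lemma~\ref{Lemma5.6} enters. I apply it with $\hat\bv=\hat{\mathcal RT}\hat\bu$, so that $\hat\Pi\hat\bv=\hat\Pi\hat\bu$ because both share degrees of freedom. Its divergence is $J\hat q$ with $\hat q=\widehat{P_h\div\bu}$, by the commuting lemma $\div\Pi_h\bu=P_h\div\bu$. Lemma~\ref{Lemma5.6} gives
\[
\|\hat{\mathcal RT}\hat\bu-\hat\Pi\hat\bu\|_{0,\hQ}\le C(|\beta|+|\gamma|)|\hat q|_{1,\hQ}.
\]
Since $\bg=\bx_{12}+\bx_{34}$ has size $O(h^2)$ only for near-parallelograms, I would use just $|\beta|+|\gamma|\le Ch^2$ from (\ref{bound_det}). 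I then need $|\hat q|_{1,\hQ}\le C|\div\bu|_{1,Q}$ up to a scale factor. Stability of the reference $L^2$ projection in $H^1$ gives $|\hat P\hphi|_1\le C\|\hphi\|_1$. Subtracting the mean $\bar\phi$, which is reproduced, gives $|\hat q|_1\le C|\hphi|_{1,\hQ}\le C|\phi|_{1,Q}$, because $|\hphi|_{1,\hQ}\le Ch\,|\phi|_{1,Q}h^{-1}$. The result is a bound $Ch^2|\div\bu|_{1,Q}$ after the Piola scaling factor $\|\bv\|_{0,Q}\le C\|\hbv\|_{0,\hQ}$.

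The main obstacle is the first term. On genuinely distorted quadrilaterals, $\hat{\mathcal RT}$ does not reproduce enough of $\nabla q$ to yield $O(h^2)$ in $L^2$, which is exactly the Arnold--Boffi--Falk obstruction, though that loss concerns the divergence rather than $L^2$. If the direct route fails, I would bound $\hat\bu-\hat\Pi\hat\bu$ directly. The curl part is exact for $Q_{2,2}$ potentials by Remark~\ref{Remark3.0}. For the gradient part, I would write $\nabla q=\nabla q_1+\nabla(q-q_1)$ with $q_1$ chosen so that $\Delta q_1=J\hat q$ has a polynomial preimage in $MRT_1$ reproduced by $\hat\Pi$. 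The remainder then has divergence $O(h^2|\div\bu|_1)$ in $H^1(\hQ)$ after scaling. Finally I would sum the squared local bounds over $Q$.
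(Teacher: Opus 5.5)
Your splitting $\bu-\Pi_Q\bu=(\bu-{\mathcal RT}_Q\bu)+({\mathcal RT}_Q\bu-\Pi_Q\bu)$ and your treatment of the second piece coincide with the paper's argument. The ingredients are $\hat\Pi\hat{\mathcal RT}\hat\bu=\hat\Pi\hat\bu$ by shared degrees of freedom, Lemma~\ref{Lemma5.6} with $\hat q$ the pullback of $\div\Pi_h\bu=P_h\div\bu$, $|\beta|+|\gamma|\le Ch_Q^2$, and $H^1$-stability of $\hP$ modulo constants.

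\textbf{The gap is the first piece, which you leave unresolved.} You discard the estimate $\|\bu-{\mathcal RT}_Q\bu\|_{0,Q}\le Ch_Q^2|\bu|_{2,Q}$. It does hold on shape-regular quadrilaterals: by Arnold--Boffi--Falk, classical $RT_k$ loses an order only in the divergence, not in $L^2$. The paper's proof simply invokes this estimate.

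Your substitute argument does not recover it. In $\div\hat\bu=J\,(\div\bu)\circ F_Q$, the term $\nabla J\,(\div\bu)\circ F_Q$ has $L^2(\hQ)$ norm bounded only by $C(|\beta|+|\gamma|)h^{-1}\|\div\bu\|_{0,Q}$. On genuinely non-parallelogram elements this is $O(h)\|\div\bu\|_{0,Q}$; your scaling carries a spurious extra factor $h$. So the bound the reference-element Helmholtz/Bramble--Hilbert route produces is only first order. Moreover, $\|q\|_3\le C\|\div\hat\bu\|_1$ is false for the Dirichlet problem on the square without a corner compatibility condition; $\Delta q=1$ already fails. The fallback with $q_1$ remains a sketch.

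\textbf{The missing idea is that $\hat{\mathcal RT}$ reproduces Piola pullbacks of linear fields.} For ${\bf p}\in P_1(Q)^2$, $\hat{\bf p}=\mathrm{adj}({DF}_Q)\,{\bf p}\circ F_Q$. The first row of $\mathrm{adj}({DF}_Q)$ depends only (linearly) on $\hx$ and the second only on $\hy$. Hence $\hat{\bf p}\in Q_{2,1}\times Q_{1,2}=RT_1(\hQ)$.

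Take ${\bf p}$ to be an averaged Taylor polynomial of $\bu$ on $Q$, and set $\hat{\bf w}=\hat\bu-\hat{\bf p}$. Since derivatives of $\mathrm{adj}({DF}_Q)$ are $O(|\bg|)=O(h_Q)$,
$\|\hat{\bf w}\|_{1,\hQ}\le C(\|\bu-{\bf p}\|_{0,Q}+h_Q|\bu-{\bf p}|_{1,Q})\le Ch_Q^2|\bu|_{2,Q}$.
Boundedness of $\hat{\mathcal RT}$ from $\bH^1(\hQ)$ to ${\bf L}^2(\hQ)$, together with the Piola scaling, then gives the first piece.

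Note also that $\div\hat{\bf p}=J\cdot\mbox{const}$, so $t_2=t_3=t_4=0$ and $G_i=H_i=0$. Thus $\hat{\bf p}\in MRT_1(\hQ,Q)$ as well. The set that matters for reproduction therefore has divergences in $J\,Q_{1,1}$, not $Q_{1,1}$. Applying the same argument directly to $\hat\Pi$, provided it is bounded uniformly over shape-regular $Q$, would bypass the splitting. It would give $Ch^2|\bu|_2$ even without the $|\div\bu|_1$ term.
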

\rev{
\begin{proof}
Let $R_h$ denote the classical Raviart--Thomas interpolation operator
onto $RT_1$.  We emphasize that $R_h$ is used here only as an auxiliary
comparison operator for the $L^2$ approximation of the vector field.
The loss of optimality on distorted quadrilateral meshes concerns the
divergence approximation of the classical $RT_1$ interpolant, not the
$L^2$ approximation estimate used below.

On each element $Q$, write
\[
{\bf u}-\Pi_h{\bf u}
=
({\bf u}-R_h{\bf u})+(R_h{\bf u}-\Pi_hR_h{\bf u})+\Pi_h(R_h{\bf u}-{\bf u}).
\]
Hence, using the local boundedness of $\Pi_h$ in $L^2$,
\[
\|{\bf u}-\Pi_h{\bf u}\|_{0,Q}
\le
C\|{\bf u}-R_h{\bf u}\|_{0,Q}
+
\|R_h{\bf u}-\Pi_hR_h{\bf u}\|_{0,Q}.
\]
The first term is estimated by the standard $L^2$ approximation property
of the classical Raviart--Thomas interpolant on shape--regular
quadrilateral meshes:
\[
\|{\bf u}-R_h{\bf u}\|_{0,Q}
\le
C h_Q^2 |{\bf u}|_{2,Q}.
\]
This estimate is distinct from the generally suboptimal estimate for
$\|\operatorname{div}({\bf u}-R_h{\bf u})\|_{0,Q}$ on distorted quadrilaterals.

It remains to estimate the correction term.
Let
\[
R_h{\bf u}|_Q=P_Q\hat v,
\qquad
\hat v\in RT_1(\hat Q).
\]
Then, by the scaling associated with the Piola transformation,
\[
\|R_h{\bf u}-\Pi_hR_h{\bf u}\|_{0,Q}
\le
C\|\hat v-\hat\Pi\hat v\|_{0,\hat Q}.
\]
By Lemma~5.6,
\[
\|\hat v-\hat\Pi\hat v\|_{0,\hat Q}
\le
C(|\beta|+|\gamma|)|\hat q|_{1,\hat Q},
\]
where
\[
\hat q=(\operatorname{div} R_h{\bf u})|_Q\circ F_Q .
\]
Since the mesh is shape regular,
\[
|\beta|+|\gamma|\le C h_Q^2.
\]
Moreover,
\[
\operatorname{div} R_h{\bf u}=P_h(\operatorname{div}{\bf u}),
\]
and the local $H^1$ stability of the projection on the finite-dimensional
space $W_h(Q)$ gives
\[
|\hat q|_{1,\hat Q}
\le
C |P_h(\operatorname{div}{\bf u})|_{1,Q}
\le
C |\operatorname{div}{\bf u}|_{1,Q}.
\]
Therefore,
\[
\|R_h{\bf u}-\Pi_hR_h{\bf u}\|_{0,Q}
\le
C h_Q^2 |\operatorname{div}{\bf u}|_{1,Q}.
\]
Combining the two estimates gives
\[
\|{\bf u}-\Pi_h{\bf u}\|_{0,Q}
\le
C h_Q^2
\bigl(
|{\bf u}|_{2,Q}
+
|\operatorname{div}{\bf u}|_{1,Q}
\bigr).
\]
Summing over all elements $Q$ yields
\[
\|{\bf u}-\Pi_h{\bf u}\|_0
\le
C h^2
\bigl(
|{\bf u}|_2+|\operatorname{div}{\bf u}|_1
\bigr).
\]
This completes the proof.
\end{proof}
} %%%%%%% end of rev

We now apply the above estimates to the mixed method.
Consider the following second-order elliptic boundary value
problem on a bounded polygonal domain $\Omega$ in $\R^2$ with the
boundary $\bd\Omega$:
\begin{equation}\label{problem}
\left\{ \begin{aligned} -\div K(\bx)\Grad p &= f  \qquad \mbox{ in
} \Omega, \\
                p&= 0  \qquad \mbox{ on } \bd\Omega.
\end{aligned} \right.
\end{equation}
Here $\bn$ is the outward unit normal vector to $\bd\Omega$, and
the coefficient $K$ is a symmetric and uniformly positive-definite
matrix, i.e., there exist two positive constants $\alpha_1$ and
$\alpha_2$ such that
\begin{equation}\label{coer}
\alpha_1\bxi^T\bxi \le \bxi^TK(\bx)\bxi \le \alpha_2\bxi^T\bxi,
\qquad \forall\bxi\in\R^2,\ \bx\in\overline{\Omega}.
\end{equation}
For brevity we will often omit dependency of the coefficients
$K,\bb$, and $c$ on the space variable $\bx$.

Rewrite the problem (\ref{problem}) as a system of first-order
partial differential equations
\begin{subequations}\label{mixed_system}
\begin{align}
K^{-1}\bu + \Grad p&= 0, \qquad \mbox{ in } \Omega, \\ \div\bu&=
f, \qquad \mbox{ in } \Omega, \\ p &= 0, \qquad \mbox{ on }
\bd\Omega.
\end{align}
\end{subequations}

Now let us introduce the function spaces
\begin{gather}
\bV=\bH({\rm div};\Omega) = \{\bv\in({\bf L}^2(\Omega)): \div\bv\in
L^2(\Omega)\}, \\ W = L^2(\Omega).
\end{gather}
Then the associated weak formulation for (\ref{mixed_system}) is
to find $(\bu,p)\in\bV\times W$ such that
\begin{subequations}\label{weak_form}
\begin{gather}
(K^{-1}\bu,\bv) - (\div\bv,p)= 0, \qquad \forall\bv\in\bV, \\
(\div\bu,w) = (f,w) , \qquad \forall w\in W,
\end{gather}
\end{subequations}
where $(\cdot,\cdot)$ denotes the standard inner product in
 $L^2(\Omega)$ or $(L^2(\Omega))^2$.

 Now let $\bV_h\subset \bV$ and $W_h\subset W$
 and consider the mixed method: Find $(\bu_h,p_h)\in \bV_h\times
 W_h$ such that

 \begin{subequations}\label{mixed}
\begin{gather}
(K^{-1}\bu_h,\bv_h) - (\div\bv_h,p_h)= 0, \qquad
\forall\bv_h\in\bV_h,
\\ (\div\bu_h,w_h) = (f,w_h) , \qquad \forall w_h\in W_h,
\end{gather}
\end{subequations}

\begin{theorem}\label{Theorem5.8}
Let ${\bf u}$ and $p$ be the solution of \eqref{weak_form} and let $({\bf u}_h,p_h)$ be the
solution of \eqref{mixed} with $V_h=MRT_k\subset V$ and
$W_h=\{\,w:\hat w\in Q_{k,k}\,\}\subset W$, where $k=0$ or $1$.
Assume that the partitions are regular. Then
\begin{align}
\|{\bf u}-{\bf u}_h\|_0
&\le
\begin{cases}
Ch\|{\bf u}\|_1, & k=0,\\[1mm]
Ch^2\bigl(|{\bf u}|_2+|\div {\bf u}|_1\bigr), & k=1,
\end{cases}
\label{eq:5.16new}\\
\|\div({\bf u}-{\bf u}_h)\|_0
&\le
Ch^{k+1}|\div {\bf u}|_{k+1},
\label{eq:5.17new}\\
\|p-p_h\|_0
&\le
Ch^{k+1}\bigl(\|p\|_1+\|{\bf u}\|_{k+1}\bigr).
\label{eq:5.18new}
\end{align}
\end{theorem}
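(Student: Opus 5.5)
The plan is the standard mixed finite element argument of Brezzi type. It uses the commuting property \eqref{div11}, the inclusion $\div \bV_h\subset W_h$, the discrete inf-sup condition of Lemma~\ref{Lemma3.6}, and the interpolation estimates of Lemma~\ref{Lemma3.4}, Lemma~\ref{Lemma3.50} and Theorem~\ref{thm5.7}.

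\emph{Divergence estimate.} From the second equations of \eqref{weak_form} and \eqref{mixed} we have $(\div \bu_h - f, w_h)=0$ for all $w_h\in W_h$. Since $\div\bu_h\in W_h$, this gives $\div\bu_h=P_h f=P_h\div\bu$. By \eqref{div11} and $\div\Pi_h\bu\in W_h$ we also get $\div\Pi_h\bu=P_h\div\bu$. Hence $\div(\bu_h-\Pi_h\bu)=0$, and therefore $\|\div(\bu-\bu_h)\|_0=\|\div(\bu-\Pi_h\bu)\|_0$. The bound \eqref{div22} then yields \eqref{eq:5.17new}.

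\emph{Velocity estimate.} Set $\bv_h=\Pi_h\bu-\bu_h\in\bV_h$; we just showed $\div\bv_h=0$. Subtracting the first equations gives the error equation
\[
(K^{-1}(\bu-\bu_h),\bv)-(\div\bv,p-p_h)=0 \qquad \forall \bv\in\bV_h .
\]
Testing with $\bv=\bv_h$ kills the pressure term, so $(K^{-1}(\bu-\bu_h),\bv_h)=0$. This gives
\[
(K^{-1}\bv_h,\bv_h)=(K^{-1}(\Pi_h\bu-\bu),\bv_h).
\]
By \eqref{coer} it follows that $\|\bv_h\|_0\le C\|\bu-\Pi_h\bu\|_0$, and the triangle inequality gives $\|\bu-\bu_h\|_0\le C\|\bu-\Pi_h\bu\|_0$. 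Lemma~\ref{Lemma3.4} then gives the case $k=0$, and Theorem~\ref{thm5.7} gives the case $k=1$.

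\emph{Pressure estimate.} Split $p-p_h=(p-P_hp)+(P_hp-p_h)$. For $\bv\in\bV_h$, the property \eqref{orthogonality2} (which holds because $\div\bV_h\subset W_h$) gives $(\div\bv,p-P_hp)=0$. The error equation then becomes
\[
(\div\bv,P_hp-p_h)=(K^{-1}(\bu-\bu_h),\bv).
\]
Applying Lemma~\ref{Lemma3.6} to $w_h=P_hp-p_h$ gives $\|P_hp-p_h\|_0\le C\|\bu-\bu_h\|_0$. For the other term, \eqref{approx_P1} gives $\|p-P_hp\|_0\le Ch^{k+1}|p|_{k+1}$. To match the stated right-hand side we use $\nabla p=-K^{-1}\bu$, so that $|p|_{k+1}$ is controlled by $\|p\|_1+\|\bu\|_k$ (for smooth $K$). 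Combined with the velocity estimate, this yields \eqref{eq:5.18new}. For $k=1$ the velocity bound involves $|\div\bu|_1$, which is bounded by $\|\bu\|_2$.

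The main obstacle is not the algebra, which is routine, but making sure every ingredient is valid for the modified space. In particular, \eqref{orthogonality2} must hold for $\bV_h=MRT_k$; this follows directly from $\div MRT_k\subset W_h$. The commuting identity $\div\Pi_h\bu=P_h\div\bu$ must also hold, and it follows from \eqref{div11}. A second point of care is the bookkeeping of norms in the pressure estimate, namely converting $|p|_{k+1}$ into the quantities appearing in \eqref{eq:5.18new}. I would handle this through the relation $\nabla p=-K^{-1}\bu$, assuming $K$ is sufficiently smooth.
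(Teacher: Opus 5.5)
Your velocity and divergence arguments are essentially the paper's. The paper drops $(p-p_h,\div(\Pi_h\bu-\bu_h))$ without comment. Your observation that $\div(\Pi_h\bu-\bu_h)=0$, since both divergences are the $L^2$ projection of $f$ onto $W_h$, is exactly what justifies that step. For the divergence, reducing to \eqref{div22} is equivalent to the paper's Cauchy--Schwarz argument.

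The pressure estimate is where you genuinely differ, and your route is the sound one. The paper applies the discrete inf-sup condition directly to $p-p_h$ and concludes $\|p-p_h\|_0\le C\|\bu-\bu_h\|_0$. But Lemma~\ref{Lemma3.6} holds only for elements of $W_h$. Because $\div\bV_h\subset W_h$, the quantity $\sup_{\bv_h}(p-p_h,\div\bv_h)/\|\bv_h\|_{\bH(\div)}$ does not see the component $p-P_hp$ at all, so it cannot bound $\|p-p_h\|_0$. For example, if $\bu\in\bV_h$ then $\bu_h=\bu$, while $p_h=P_hp$ differs from $p$ in general.

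Your splitting through $P_hp$ is the standard remedy. Its price is the extra term $\|p-P_hp\|_0\le Ch^{k+1}|p|_{k+1}$, which is unavoidable because $\|p-p_h\|_0\ge\|p-P_hp\|_0$.
\begin{itemize}
\item For $k=0$ this term is at most $Ch\|p\|_1$, with no assumption on $K$.
\item For $k=1$, your conversion of $|p|_2$ via $\nabla p=-K^{-1}\bu$ needs something like $K^{-1}\in W^{1,\infty}$. The theorem does not state this, but it is genuinely needed for the second-order pressure bound as formulated; it is not an artifact of your argument.
\end{itemize}

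One point you should make explicit: $P_h$ must be the $L^2(\Omega)$-orthogonal projection onto $W_h$. The $P_h$ of Section~2 is defined on $\hat Q$ without the Jacobian weight. For that operator \eqref{orthogonality2} fails on $MRT_k$, because $\div\hat\bu_M=J\hat q\notin Q_{k,k}$. For the true $L^2$ projection, \eqref{orthogonality2} follows from $\div\bV_h\subset W_h$ as you say, and \eqref{approx_P1} carries over by the best-approximation property.
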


\begin{proof}
Subtracting \eqref{weak_form} from \eqref{mixed}, we have
\[
(K^{-1}({\bf u}-{\bf u}_h),{\bf v}_h) -(p-p_h,\div {\bf v}_h)=0
\qquad \forall {\bf v}_h\in V_h,
\]
and
\[
(\div({\bf u}-{\bf u}_h),q_h)=0
\qquad \forall q_h\in W_h.
\]

Hence
\begin{align*}
C\|{\bf u}-{\bf u}_h\|_0^2
&\le (K^{-1}({\bf u}-{\bf u}_h),{\bf u}-{\bf u}_h) \\
&= (K^{-1}({\bf u}-{\bf u}_h),\Pi_h{\bf u}-{\bf u}_h)
   +(K^{-1}({\bf u}-{\bf u}_h),{\bf u}-\Pi_h{\bf u}) \\
&= (p-p_h,\div(\Pi_h{\bf u}-{\bf u}_h))
   +(K^{-1}({\bf u}-{\bf u}_h),{\bf u}-\Pi_h{\bf u}) \\
&= (K^{-1}({\bf u}-{\bf u}_h),{\bf u}-\Pi_h{\bf u}) \\
&\le C\|{\bf u}-{\bf u}_h\|_0\,\|{\bf u}-\Pi_h{\bf u}\|_0 .
\end{align*}
Therefore,
\[
\|{\bf u}-{\bf u}_h\|_0 \le C\|{\bf u}-\Pi_h{\bf u}\|_0.
\]

For $k=0$, Lemma~5.2 gives
\[
\|{\bf u}-{\bf u}_h\|_0 \le C\|{\bf u}-\Pi_h{\bf u}\|_0 \le Ch\|{\bf u}\|_1.
\]

For $k=1$, Theorem~5.7 gives
\[
\|{\bf u}-{\bf u}_h\|_0 \le C\|{\bf u}-\Pi_h{\bf u}\|_0
\le Ch^2\bigl(|{\bf u}|_2+|\div {\bf u}|_1\bigr).
\]

This proves \eqref{eq:5.16new}. The proof of \eqref{eq:5.17new} is the same as
before:
\[
\|\div({\bf u}-{\bf u}_h)\|_0^2
=
(\div({\bf u}-{\bf u}_h),\div {\bf u}-P_h\div {\bf u})
\le
\|\div({\bf u}-{\bf u}_h)\|_0\,\|P_h\div {\bf u}-\div {\bf u}\|_0,
\]
and therefore
\[
\|\div({\bf u}-{\bf u}_h)\|_0 \le Ch^{k+1}|\div {\bf u}|_{k+1}.
\]
\rev{
The pressure estimate \eqref{eq:5.18new} follows from the discrete inf-sup condition.
Indeed,
\[
\|p-p_h\|_0
\le
C\sup_{{\bf v}_h\in {\bf v}_h}
\frac{(p-p_h,\div {\bf v}_h)}
{\|{\bf v}_h\|_{H(\div)}}.
\]
Using the error equation,
\[
(p-p_h,\div {\bf v}_h)
=
(K^{-1}({\bf u}-{\bf u}_h),{\bf v}_h),
\]
hence
\[
|(p-p_h,\div {\bf v}_h)|
\le
C\|{\bf u}-{\bf u}_h\|_0\|{\bf v}_h\|_{H(\div)}.
\]
Therefore
\[
\|p-p_h\|_0
\le
C\|{\bf u}-{\bf u}_h\|_0.
\]
Combining this with \eqref{eq:5.16new} proves \eqref{eq:5.18new}.
}%%%%%%%%%%%% end of rev
\end{proof}

For $k=1$, the estimate involves the additional term $|\div {\bf u}|_1$
arising from the geometry-dependent modification.

\section{Post-processing of classical $RT_0$ solutions}
In addition to constructing modified Raviart--Thomas spaces,
the modification framework can also be used as a post-processing
tool for solutions obtained with classical mixed finite element
methods. In particular, solutions computed with the standard
$RT_0$ method can be locally corrected using the same geometric
modification procedure described earlier. This leads to an
improved approximation without increasing the number of global
degrees of freedom.
In this section we address the following question: Let $\bV_h$ be
any of the $RT_k, k=0,1$ and  let $\bu_h$ in
$\bV_h$ be a solution of \begin{equation}\label{dvieq1}
(\div\bu_h,q_h)=(f,q_h) \qquad \forall q_h\in W_h,
\end{equation}where $W_h$ is the pressure space corresponding to
$\bV_h$ as defined before. Now let $\bu$ be a solution of
\begin{equation}\label{diveq2}
 (\div\bu,q)=(f,q)\qquad \forall q\in W. \end{equation} Since we
know $\div(\bu-\bu_h)$ does not have optimal order, we ask if we
can use the modified element ${\mathcal M}\bu_h$ as a
post-processor for $\bu_h$. Unfortunately, the answer is yes only
in the $RT_0$ case. Nevertheless, the optimal order of
approximation can be achieved by this element.
\begin{theorem}\label{postprocessing}
Let $\bu_h\in \bV_h=RT_0$ be a solution of (\ref{dvieq1}) and
let $\tilde\bu_h$ be the modified element ${\mathcal M}\bu_h$ as
defined before. Then
\begin{equation}\label{post}
(\div\tilde \bu_h,q_h)=(f,q_h)\qquad \forall q_h\in W_h
\end{equation}
and \begin{equation}\label{repeat} ||\div(\bu-\tilde\bu_h)||_0\le
Ch|\div \bu\|_{1}, \end{equation} where $\bu$ is a solution of
(\ref{diveq2}).
\end{theorem}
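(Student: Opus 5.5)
The plan is to work element by element on the reference square, using the explicit form of the modification map $\mathcal M$ from Section~\ref{sub}. Write $\bu_h|_Q={\mathcal P}_Q\hat\bu$ with $\hat\bu=(a+b\hx,\,c+d\hy)\in RT_0(\hQ)$. By (\ref{def}), the modified element is ${\mathcal M}\bu_h|_Q={\mathcal P}_Q\hat\bu_M$, where
\[
\hat\bu_M=\hat\bu+\frac{b+d}{2|Q|}\bigl(\beta\,\hx(\hx-1),\ \gamma\,\hy(\hy-1)\bigr).
\]
Two facts about the added bubble terms are all we need. First, the correction vanishes on the edges in the normal direction: the first component is zero at $\hx=0,1$ and the second at $\hy=0,1$. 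Hence $\tilde\bu_h$ has the same normal fluxes as $\bu_h$ on every edge. Since those fluxes are single valued across interior edges, $\tilde\bu_h\in\bH(\div;\Omega)$. Second, by construction $\div\tilde\bu_h|_Q=\frac1J\div\hat\bu_M=t_0=(b+d)/|Q|$, which is constant on $Q$, so $\div\tilde\bu_h\in W_h$.

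For (\ref{post}), take $q_h\in W_h$; it is constant on each $Q$. Then
\[
\int_Q\div\tilde\bu_h\,q_h=q_h|_Q\int_{\bd Q}\tilde\bu_h\cdot\bn\,ds=q_h|_Q\int_{\bd Q}\bu_h\cdot\bn\,ds=\int_Q\div\bu_h\,q_h .
\]
The middle equality uses the equality of normal fluxes shown above. Summing over $Q$ and invoking (\ref{dvieq1}) gives (\ref{post}). As a side remark, $\int_Q\div\tilde\bu_h=\int_{\hQ}\div\hat\bu_M=b+d$, which is consistent with $t_0|Q|=b+d$.

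For (\ref{repeat}), I would follow the proof of (\ref{div22}). Since $W\supset W_h$, (\ref{diveq2}) together with (\ref{post}) gives $(\div(\bu-\tilde\bu_h),q_h)=0$ for all $q_h\in W_h$. Combined with $\div\tilde\bu_h\in W_h$, this forces $\div\tilde\bu_h=P_h\div\bu$, where $P_h$ is here the elementwise mean (the $k=0$ pressure projection). Then Lemma~\ref{e2}, estimate (\ref{approx_P1}) with $k=0$, gives $\|\div\bu-P_h\div\bu\|_0\le Ch|\div\bu|_1$.

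The only point needing care is the first step: confirming that the bubble correction changes neither the normal traces nor the edge degrees of freedom, so that conformity and the flux identity hold. This is immediate from the factors $\hx(\hx-1)$ and $\hy(\hy-1)$. For $RT_1$ the same argument breaks down because the interior moments change, which explains the paper's remark that the answer is positive only for $RT_0$.
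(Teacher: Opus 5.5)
Your proof is correct and takes essentially the paper's route. The bubble part has vanishing normal trace, so it contributes nothing to $\int_Q \div(\cdot)\,q_h$ for piecewise-constant $q_h$; the paper verifies the same fact as $\int_{\hQ}\div\hat\bb_h\,q_h\,d\hx\,d\hy=0$ by direct calculation. Estimate (\ref{repeat}) then follows from Galerkin orthogonality together with $\div\tilde\bu_h\in W_h$, exactly as in the proof of (\ref{eq:5.17new}).

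One small nuance concerns the projection. The paper's $P_h$ is built from the reference-element projection, which differs from the physical elementwise mean when $J$ is not constant. Since the elementwise mean is the $L^2$-best approximation from $W_h$, the bound (\ref{approx_P1}) carries over to it immediately.
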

\begin{proof}First recall on $\hQ$ that $\hat {\mathcal M}\hat \bu_h
 =\hat\bu_h+\hat\bb_h$ where
$\hat\bb_h$ is the bubble function part in (\ref{def}). Now we
need only observe by direct calculation that
\[\int_{\hQ}\div\hat\bb_h\, q_h d\hx d\hy=0,\]
and hence
\begin{eqnarray*}
(\div \bu_h,q_h)&=&\sum_Q\int_Q\div\bu_h\, q_h dxdy\\
&=&\sum_Q\int_{\hQ}\div\hat\bu_h\hat q_h d\hx d\hy\\
&=&\sum_Q\int_{\hQ}\div\hat {\mathcal M}\hat\bu_h\,q_h d\hx
d\hy-\int_{\hQ}\div\hat\bb_h\, q_h d\hx d\hy\\
&=&\sum_Q\int_{\hQ}\div\hat {\mathcal M}\hat\bu_h\,q_h d\hx
d\hy=(\div\tilde \bu_h,q_h)
\end{eqnarray*}
after transforming back.

Since \[(\div({\bf u}-\tilde {\bf u}_h),q_h)=0\quad  \forall q_h\in W_h,\] the estimate (\ref{repeat}) follows
by the same argument used in the proof of \eqref{eq:5.17new}, together with
the approximation property of the $L^2$ projection. This completes the proof.
\end{proof}

\section{Numerical Experiments}
\rev{
In this section we present several numerical experiments illustrating
the behavior of the modified Raviart--Thomas element $MRT_1$
on distorted quadrilateral meshes.
The experiments are designed to investigate three related aspects:

\begin{itemize}
\item the compatibility property
\[
\div V_h \subset W_h,
\]
\item the approximation behavior of the interpolation operator,
\item and the performance of the resulting mixed finite element method
for elliptic problems.
\end{itemize}

Throughout the experiments we consider a family of distorted
quadrilateral meshes generated from a uniform partition of $\Omega$
into $N\times N$ squares, followed by an alternating
compression--stretch perturbation controlled by a distortion parameter
$\alpha\in[0,1)$.
For each element indexed by $(i,j)$, let
\[
s = (-1)^{i+j}.
\]
The interior vertices are then perturbed so that adjacent elements are
distorted in opposite directions, producing a non-affine quadrilateral
mesh while preserving mesh regularity.
The case $\alpha=0$ corresponds to an affine mesh, whereas larger
values of $\alpha$ produce increasingly distorted quadrilateral
elements.

Figure~\ref{fig:distortedmesh} illustrates a representative distorted
mesh used in the computations.
\begin{figure}[h!]
\centering
\begin{tikzpicture}[scale=5]

% Bottom row
\coordinate (P00) at (0.00,0.00);
\coordinate (P10) at (0.25,0.00);
\coordinate (P20) at (0.50,0.00);
\coordinate (P30) at (0.75,0.00);
\coordinate (P40) at (1.00,0.00);

% Row 1
\coordinate (P01) at (0.00,0.25);
\coordinate (P11) at (0.18,0.25);
\coordinate (P21) at (0.57,0.25);
\coordinate (P31) at (0.68,0.25);
\coordinate (P41) at (1.00,0.25);

% Row 2
\coordinate (P02) at (0.00,0.50);
\coordinate (P12) at (0.32,0.50);
\coordinate (P22) at (0.43,0.50);
\coordinate (P32) at (0.82,0.50);
\coordinate (P42) at (1.00,0.50);

% Row 3
\coordinate (P03) at (0.00,0.75);
\coordinate (P13) at (0.18,0.75);
\coordinate (P23) at (0.57,0.75);
\coordinate (P33) at (0.68,0.75);
\coordinate (P43) at (1.00,0.75);

% Top row
\coordinate (P04) at (0.00,1.00);
\coordinate (P14) at (0.25,1.00);
\coordinate (P24) at (0.50,1.00);
\coordinate (P34) at (0.75,1.00);
\coordinate (P44) at (1.00,1.00);

% Horizontal edges
\draw (P00)--(P10)--(P20)--(P30)--(P40);
\draw (P01)--(P11)--(P21)--(P31)--(P41);
\draw (P02)--(P12)--(P22)--(P32)--(P42);
\draw (P03)--(P13)--(P23)--(P33)--(P43);
\draw (P04)--(P14)--(P24)--(P34)--(P44);

% Vertical edges
\draw (P00)--(P01)--(P02)--(P03)--(P04);
\draw (P10)--(P11)--(P12)--(P13)--(P14);
\draw (P20)--(P21)--(P22)--(P23)--(P24);
\draw (P30)--(P31)--(P32)--(P33)--(P34);
\draw (P40)--(P41)--(P42)--(P43)--(P44);

% Nodes
\foreach \X in {00,10,20,30,40,01,11,21,31,41,02,12,22,32,42,03,13,23,33,43,04,14,24,34,44}
  \fill (P\X) circle (0.45pt);

\end{tikzpicture}
\caption{A representative distorted quadrilateral mesh with alternating compression--stretch pattern.}
\label{fig:distortedmesh}
\end{figure}

%%%%%%%%%%%%%%%%%%%%%%%%%%%%%%%%%%%%%%%%%%%%%%%%%%%%%%%%%%%%
\subsection{Compatibility defect}

One of the principal motivations for the modified construction is the
loss of the compatibility property
\[
\div V_h \subset W_h
\]
for the classical Piola-mapped $RT_1$ space on distorted quadrilateral
meshes.
To quantify this effect, we consider the defect
\[
\eta_h(\bv_h)
=
\frac{
\|(I-P_{W_h})\div \bv_h\|_{L^2(\Omega)}
}{
\|\div \bv_h\|_{L^2(\Omega)}
},
\]
where $P_{W_h}$ denotes the $L^2$ projection onto the pressure space
$W_h$.

For each basis function of the velocity space, the quantity
$\eta_h(\bv_h)$ measures the component of the divergence lying outside
the discrete pressure space.
The values reported below are averaged over all local basis functions
and mesh elements.

\begin{table}[htbp]
\centering
\caption{Compatibility defect for RT$_1$ and MRT$_1$.}
\label{tab:defect}
\begin{tabular}{c|cc}
\hline
$N$
&
RT$_1$
&
MRT$_1$
\\
\hline
\multicolumn{3}{c}{$\alpha=0.00$}
\\
\hline
4  & $7.94\times10^{-8}$ & $7.81\times10^{-8}$
\\
8  & $7.94\times10^{-8}$ & $7.81\times10^{-8}$
\\
16 & $7.94\times10^{-8}$ & $0$
\\
32 & $7.94\times10^{-8}$ & $5.79\times10^{-8}$
\\
\hline
\multicolumn{3}{c}{$\alpha=0.60$}
\\
\hline
4  & $3.56\times10^{-1}$ & $8.61\times10^{-8}$
\\
8  & $3.56\times10^{-1}$ & $3.88\times10^{-8}$
\\
16 & $3.56\times10^{-1}$ & $0$
\\
32 & $3.56\times10^{-1}$ & $5.79\times10^{-8}$
\\
\hline
\end{tabular}
\end{table}

For affine meshes ($\alpha=0$), both $RT_1$ and $MRT_1$
satisfy the compatibility property up to machine precision.
However, for distorted meshes ($\alpha=0.60$),
the compatibility defect of the classical $RT_1$ space remains
bounded away from zero, while the modified space $MRT_1$
restores the compatibility relation to machine precision.
This experiment directly confirms the theoretical discussion
following \eqref{Falk}.

%%%%%%%%%%%%%%%%%%%%%%%%%%%%%%%%%%%%%%%%%%%%%%%%%%%%%%%%%%%%
\subsection{Interpolation errors}

We next study the approximation behavior of the interpolation operators.
The exact vector field is chosen sufficiently smooth so that the
approximation properties are dominated by the geometry of the mesh.

Table~\ref{tab:divinterp} reports the divergence interpolation errors
for $RT_1$ and $MRT_1$ on distorted meshes.

\begin{table}[htbp]
\centering
\caption{Divergence interpolation errors for $RT_1$ and $MRT_1$.}
\label{tab:divinterp}
\begin{tabular}{c|cc|cc}
\hline
&
\multicolumn{2}{c|}{RT$_1$}
&
\multicolumn{2}{c}{MRT$_1$}
\\
$N$&error&order&error&order
\\
\hline
\multicolumn{5}{c}{$\alpha=0.60$}\\
\hline
4&$7.93949\times10^{-2}$&---&$1.29569\times10^{-2}$
&---\\
8&$3.74578\times10^{-2}$&1.084&$3.23921\times10^{-3}$&2.000
\\
16&$1.81729\times10^{-2}$&1.043&$8.09804\times10^{-4}$&2.000
\\
32&$8.94803\times10^{-3}$&1.022&$2.02451\times10^{-4}$&2.000
\\
\hline
\end{tabular}
\end{table}

The results clearly show the loss of optimal divergence approximation
for the classical $RT_1$ element on distorted quadrilateral meshes.
The observed convergence rate is approximately first order,
which is consistent with the theoretical estimate discussed after
\eqref{Falk} and with the analysis of Arnold, Boffi, and Falk \cite{arnold2002approximation,arnold2005quadrilateral}.
In contrast, the modified space $MRT_1$ restores the expected
second-order convergence in the divergence norm.

Thus the modification successfully compensates for the geometric
distortion introduced by the bilinear mapping while preserving the
degrees of freedom and dimension of the classical $RT_1$ space.

%%%%%%%%%%%%%%%%%%%%%%%%%%%%%%%%%%%%%%%%%%%%%%%%%%%%%%%%%%%%
\subsection{Mixed finite element approximation}

The previous experiments show that the modified space restores the
compatibility relation and recovers optimal divergence interpolation
behavior on distorted meshes.
We next investigate how these geometric effects influence the full
mixed finite element approximation for elliptic problems.

It is important to note that the loss of compatibility in the classical
$RT_1$ space does not necessarily imply catastrophic deterioration of
the PDE solution itself. Depending on the structure of the continuous
solution and the diffusion tensor, the incompatible divergence modes
may be only weakly excited. Consequently, both $RT_1$ and $MRT_1$
may still exhibit similar convergence behavior for certain model
problems, even though their underlying approximation mechanisms differ.

Thus the interpolation experiments provide a clearer numerical
manifestation of the geometric compatibility defect. The classical
$RT_1$ space produces divergence components lying outside the discrete
pressure space on distorted quadrilateral meshes, whereas the modified
space $MRT_1$ restores the compatibility relation
\[
\div V_h \subset W_h
\]
to machine precision. This restoration is reflected directly in the
optimal divergence interpolation behavior observed in the previous
subsection.

Finally, we consider the mixed approximation of the elliptic problem
\[
-\div(K\nabla p)=f
\quad\text{in }\Omega,
\]
with exact solution
\[
p=(x-x^2)(y-y^2),
\]
and anisotropic diffusion tensor
\[
K=
\begin{pmatrix}
10000 & 0\\
0 & 1
\end{pmatrix}.
\]

The corresponding flux is
\[
{\bf u}=-K\nabla p.
\]

Table~\ref{tab:pde} reports the errors for the mixed finite element
approximation on distorted meshes.

\begin{table}[htbp]
\centering
\caption{Divergence errors for the mixed finite element approximation.}
\label{tab:pde}
\begin{tabular}{c|cc|cc}
\hline&\multicolumn{2}{c|}{RT$_1$ divergence}
&\multicolumn{2}{c}{MRT$_1$ divergence}
\\
$N$&error&order&error&order
\\
\hline\multicolumn{5}{c}{$\alpha=0.90$}\\
\hline
4&$9.50664\times10^{1}$&---&$9.31541\times10^{1}$&---
\\
8&$2.38760\times10^{1}$&1.993&$2.32913\times10^{1}$&2.000
\\
16&$5.97635\times10^{0}$&1.998&$5.82303\times10^{0}$&2.000
\\
32&$1.49456\times10^{0}$&2.000&$1.45577\times10^{0}$&2.000
\\
\hline
\end{tabular}
\end{table}

The numerical experiments therefore suggest that the primary
advantage of the modified space is structural rather than merely
empirical. In particular, the modified construction restores the discrete geometric compatibility relation independently of the specific PDE being solved. The space $MRT_1$ restores the discrete compatibility
relation on distorted quadrilateral meshes while preserving the
dimension and degrees of freedom of the classical $RT_1$ space.
The resulting improvement is most clearly visible in the divergence
interpolation behavior, while the impact on the full PDE solution
depends on how strongly the incompatible divergence modes are
excited by the underlying problem.

\section{Conclusion}
We have presented a framework for constructing modified
mixed finite element spaces of Raviart\nobreakdash--Thomas type
on quadrilateral meshes. The modification
introduces geometry-dependent correction terms that compensate
for distortion effects arising from the bilinear mapping between
the reference square and a general quadrilateral element. The modified
spaces preserve the dimension and degrees of freedom of the classical
Raviart--Thomas spaces while restoring desirable approximation
properties under standard shape--regularity assumptions.

The construction was illustrated for the lowest two orders,
leading to modified $RT_0$ and $RT_1$ elements. For the $RT_1$
case, the numerical experiments on distorted quadrilateral meshes
confirmed the predicted second-order interpolation behavior and
demonstrated that the modified formulation restores the compatibility relation and
maintains optimal divergence behavior under increasing geometric distortion.

In addition, the same modification framework can be interpreted
as a local post-processing procedure for solutions obtained with
the classical $RT_0$ method. This provides improved local
approximations without increasing the number of global degrees
of freedom. The approach may also be extended to other finite
element spaces conforming in $\bH(\div)$ on quadrilateral meshes.

The modified Raviart--Thomas spaces constructed in this paper may also
be useful for mixed formulations of the Stokes equations on distorted
quadrilateral meshes. Since the modified spaces restore the compatibility
property between the discrete velocity and pressure spaces while preserving
the classical degrees of freedom, the standard mixed stability framework
is expected to remain applicable. A detailed analysis for Stokes problems,
including discrete inf-sup stability and optimal error estimates, will be
considered in future work.

Another natural direction is the extension of the present geometric
correction framework to three-dimensional hexahedral meshes. In that
setting the Piola transformation involves a trilinear mapping and a
more complicated Jacobian structure. Nevertheless, the present work
suggests that suitable geometry-dependent correction terms may again
restore compatibility between the discrete velocity and pressure spaces.
}

\bibliographystyle{siam}
\bibliography{references}

\end{document}